\documentclass[12pt]{article}      
\usepackage{nicematrix,fourier,extsizes,blkarray,empheq}
\usepackage{array}
\usepackage{tikz-cd}
\newcolumntype{C}{>{$}c<{$}}
\usepackage{blkarray}
\usepackage{relsize,bm}

\usepackage{amssymb}
\usepackage{eucal}
\usepackage{amsmath}
\usepackage{amsthm}
\usepackage{graphicx}
\usepackage{float}
\usepackage{framed}
\usepackage{enumerate}
\usepackage{hyperref}
\usepackage[toc,page]{appendix}

\newcommand{\R}{{\mathbb  R}}   
\newcommand{\sm}{\mathlarger{\mathlarger{{\Sigma}}}}
 
\newcounter{mastercounter}
\numberwithin{mastercounter}{section}

\newtheorem{thm}[mastercounter]{Theorem}
\newtheorem{lemma}[mastercounter]{Lemma}

\newtheorem{rem}[mastercounter]{Remark}
\makeatletter
\let\c@equation\c@mastercounter
\makeatother

\numberwithin{equation}{section}

\usepackage[T1]{fontenc}

\begin{document}

\title{
Optimization with inequality constraints by the embedded gradient vector field method
}

\author{Petre Birtea, Ioan Ca\c su, Dan Com\u{a}nescu\\
{\small Department of Mathematics, West University of Timi\c soara} 
\\
{\small Bd. V. P\^ arvan, No 4, 300223 Timi\c soara, Rom\^ania}\\
{\small Email: petre.birtea@e-uvt.ro, ioan.casu@e-uvt.ro, dan.comanescu@e-uvt.ro}}
\date{}

\maketitle

\begin{abstract}
We develop a geometric framework for constrained optimization problems with inequality constraints through the introduction of quadratic slack variables. This formulation makes it possible to employ the language of Riemannian geometry and to solve the problem via the embedded gradient vector field method. We lift the feasible set to a smooth submanifold of an extended ambient space. The stratified structure of the resulting constraint manifold is analyzed in detail, yielding a natural partition according to which constraints are active. Using the embedded gradient vector field formalism, we derive explicit, determinantal formulas for the Lagrange multiplier functions directly from the geometry of the constraint manifold, recovering and re-framing the classical Karush-Kuhn-Tucker first-order necessary conditions without invoking the classical Lagrange multiplier method. Second-order optimality conditions are obtained by computing the restricted Hessian on each stratum, and a complete sign condition on the Lagrange multipliers is identified as the geometric counterpart of the classical complementary slackness condition. The theory is illustrated on the double ice-cream cone example, where the geometry of the problem determines the nature and number of local minima.

\end{abstract}
{\bf Keywords:} optimization, inequality constraints, Karush-Kuhn-Tucker conditions, embedded gradient vector fields \\
{\bf MSC Subject Classification 2020:} 49K10, 58C05, 53B21, 90C30, 90C46.

\maketitle

\section{Introduction}

For smooth functions $f_0, f_i, h_j:\R^N\to \R$, we consider the following optimization problem\footnote{The argmax problem is equivalent with the argmin problem for the objective function $-f_0$.}
\begin{equation}\label{with-equalities-121}
\begin{aligned}
\underset{{\bf x}\in \R^N}{\arg\min}\quad & f_0(\mathbf{x}) \\
\text{s.t.}\quad & f_i(\mathbf{x}) \le 0, \quad i = 1,\dots,m \\
 & h_j({\bf x})=0, \quad   j=1,\dots, p.
\end{aligned}
\end{equation}

Under appropriate regularity and constraint qualification assumptions on the objective function $f_0$  and the constraint functions $f_i$ and $h_j$, the optimization problem can be characterized by the Karush-Kuhn-Tucker (KKT) conditions. 
The Karush-Kuhn-Tucker conditions occupy a central role in nonlinear optimization. Originally derived independently by Karush (1939) and by Kuhn and Tucker (1951), they provide necessary (and, under a convexity assumption, sufficient) first-order optimality conditions for problems with inequality and equality constraints. Standard treatments appear in the monographs of Rockafellar \cite{rockafellar}, Bertsekas \cite{bertsekas}, and Nocedal and Wright \cite{nocedal}.
These conditions extend the classical method of Lagrange multipliers to problems with inequality constraints. 

A classical device for converting inequality constraints into equality constraints is the introduction of the so-called slack variables. The squared slack variable technique, in the form $f_i(x) +\frac{1}{2}z_i^2= 0$, appears in the context of Newton-type methods for general mathematical programming in the work of Tapia \cite{tapia}. A detailed modern analysis of this technique, including its implications for regularity, constraint qualifications, and the relationship between the original and slack-extended problems, is given by Fukuda and Fukushima \cite{fukushima}, who clarify several subtle points regarding the equivalence of KKT points and second-order conditions before and after the slack transformation.

Our purpose is to solve this optimization problem by means of the embedded gradient  vector field method. This method was developed by the authors in \cite{birtea-comanescu}, originally in the context of geometric dissipation for dynamical systems, and extended to second-order (Hessian) analysis on constraint manifolds in \cite{Birtea-Comanescu-Hessian}. These works place constrained optimization firmly within Riemannian geometry: the gradient and Hessian of a restricted function are expressed intrinsically in terms of determinantal formulas involving Gram matrices of constraint gradients, bypassing the need to explicitly solve for Lagrange multipliers.

The present paper develops this programme in the setting of inequality constraints.
The central idea is to introduce quadratic slack variables $z_i$, converting each
inequality $f_i(x) \leq 0$ into an equality $f_i(x) + \dfrac{1}{2}z_i^2 = 0$, and to
regard the resulting level set as a submanifold of an extended ambient space
$\mathcal{M} \times \mathbb{R}^m$. Away from singular points, those at which the gradients
of the slack constraint functions become linearly dependent, the Regular Value
Theorem endows this level set with a smooth manifold structure, and the embedded
gradient vector field formalism applies directly.

A key feature of this approach is the natural stratification it induces on the feasible
set. The constraint manifold decomposes into strata indexed by subsets of active
constraints: a fully interior stratum ${\bf S}_0$, where all slack variables are nonzero and
no constraint is active, and boundary strata ${\bf S}_{(i_1, \ldots, i_k)}$, where precisely the
constraints $f_{i_1}, \ldots, f_{i_k}$ are active.  We prove that each stratum, and its projection
onto the original variable space $\mathcal{M}$, is itself a smooth submanifold. This stratified
structure organizes the search for local minima into a finite collection of
subproblems, one for each possible active set, and is analyzed in detail in
Section~2. 

The stratification of the feasible set by active constraints, exploited systematically in the present paper, is related to the Whitney stratification theory of semi-algebraic sets; see \cite{goresky} and the optimization-oriented treatment in \cite{shapiro}. The idea of organizing optimality conditions stratum by stratum, distinguishing the interior, boundary faces, and their intersections, connects also to the theory of active-set methods in nonlinear programming, see \cite{nocedal} and to the study of partly smooth functions introduced in \cite{lewis}, where manifold-like structure of the active constraint set is similarly exploited to obtain sharper second-order conditions. 

Within each stratum, the embedded gradient vector field method yields explicit
determinantal formulas for the Lagrange multiplier functions $\mu_i$ associated with
the active constraints, expressed directly in terms of Gram matrices of constraint
gradients via Cramer's rule. The vanishing of the embedded gradient is shown to
be equivalent to the classical KKT stationarity condition, while the complementary
slackness condition emerges geometrically from the product structure of the tangent
space to the constraint manifold. Crucially, the sign requirement $\mu_i \leq 0$ for a
minimum appears not as an additional hypothesis imposed from outside, but as a
necessary consequence of the positive semi-definiteness of the restricted Hessian on
the constraint manifold. Second-order sufficient conditions are likewise obtained by
computing the restricted Hessian on each stratum, yielding a complete set of first-
and second-order optimality tests summarized in Figure~1.
Second-order necessary and sufficient conditions for nonlinear programs have a long history; see the surveys \cite{bonnans} and the treatment in\cite{rusz}. The sign conditions on Lagrange multipliers identified in the present paper, strict negativity for a strict local minimum, correspond to the classical strict complementarity condition, whose role in sensitivity analysis and in the convergence of interior-point methods is well documented in \cite{wright}.

In Section~3, we illustrate the theory on the double ice-cream cone problem, which seeks
to minimize the linear function $f_0({\bf x}): = c_1 x_1 + c_2 x_2 + c_3 x_3$ over the
intersection of a ball and a solid cone in $\mathbb{R}^3$. This example is chosen
because the geometry of the feasible set, and in particular the position of the
vector ${\bf c}$ relative to the cone, determines both the number and nature of the local
minima in a way that is transparent within the stratified framework. When ${\bf c}$ lies
strictly inside the cone the unique local minimum is found on the spherical-cap
stratum; when ${\bf c}$ lies strictly outside the cone two local minima appear on the
stratum where both constraints are simultaneously active; and when ${\bf c}$ lies exactly
on the cone the first- and second-order tests are inconclusive on part of the boundary
stratum, and an ad hoc argument using compactness identifies the global/local minimizers.
This case-by-case analysis demonstrates both the power and the current limitations
of the method, and points naturally toward future work on degenerate cases.

\section{Necessary optimality and sufficient optimality conditions for problems with inequality constraints}

The first stage is to organize the equality constraints as a Riemannian manifold, i.e. suppose that $\mathcal{M}:=\{{\bf x}\in \R^N\,|\,h_j({\bf x})=0, \,\,j\in \{1,\dots,p\}\}$ is a smooth manifold of dimension $N-p$, a condition guaranteed when the null element ${\bf 0}_p\in \R^p$ is  a regular value for the set of equality constraints. We endow this manifold with the induced Riemannian metric, denoted by ${\bf g}$, from the ambient space $\R^N$. The initial problem becomes
\begin{equation}\label{without-equlities-998} 
\begin{aligned}
\underset{\mathbf{x} \in \mathcal{M}}{\arg\min}\quad & f_0(\mathbf{x}) \\
\text{s.t.}\quad & f_i(\mathbf{x}) \le 0, \quad i = 1,\dots,m
\end{aligned}
\end{equation}

The second stage is to introduce a set of so-called ``slack variables''  that further transform the inequality constraints into equality constraints. Let $F_i:\mathcal{M}\times \R^m\to \R$ be the smooth functions, that we call {\bf the slack constraint functions}, $F_i({\bf x}, {\bf z}):=f_i({\bf x})+\frac{1}{2}z_i^2$. It is immediate to see that the feasible set can be described by the set equality 
$$\{{\bf x}\in \mathcal{M}\,|\,f_i({\bf x})\leq 0,\,i\in \{1,\dots,m\}\}=\Pi ({\bf F}^{-1}({\bf 0}_{m})),$$
where ${\bf F}=(F_1,\dots,F_m)$ and
 $$\Pi:\mathcal{M}\times \R^m\to \mathcal{M},\,\,\Pi ({\bf x},{\bf z}):={\bf x}$$ 
 is the projection on $\mathcal{M}$ along $\R^m$.

Observing the above set equality, we relate the initial optimization problem \eqref{with-equalities-121}, or equivalently \eqref{without-equlities-998}, with the following optimization problem with equality constraints, 
\begin{equation}\label{new-problem-554} 
\begin{aligned}
\underset{({\bf x}, {\bf z})\in {\bf F}^{-1}({\bf 0}_{m})}{\arg\min}\quad & F_0(\mathbf{x}, {\bf z}) 
\end{aligned}
\end{equation}
where $F_0:\mathcal{M}\times \R^m\to \R$ be the smooth function $F_0({\bf x}, {\bf z}):=f_0({\bf x})$. 

The next result appears in \cite{tapia} (without proof), and is also cited in \cite{fukushima}.

\begin{thm}\label{echivalenta777}
The optimization problems \eqref{without-equlities-998} and  \eqref{new-problem-554} are equivalent. More precisely:
\begin{enumerate}[(i)]
\item If ${\bf x}^*\in \Pi ({\bf F}^{-1}({\bf 0}_{m}))$ is a solution of \eqref{without-equlities-998}, then any $({\bf x}^*, {\bf z})\in {\bf F}^{-1}({\bf 0}_{m})$ is a solution of \eqref{new-problem-554}. 
\item If  $({\bf x}^*, {\bf z}^*)\in {\bf F}^{-1}({\bf 0}_{m})$ is a solution of \eqref{new-problem-554}, then ${\bf x}^*\in \Pi ({\bf F}^{-1}({\bf 0}_{m}))$ and it is a solution of \eqref{without-equlities-998}.
\end{enumerate}
\end{thm}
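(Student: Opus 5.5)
The plan is a direct comparison of objective values, driven by two facts. First, the set equality $\Pi({\bf F}^{-1}({\bf 0}_m))=\{{\bf x}\in\mathcal{M}\,|\,f_i({\bf x})\le 0\}$ noted above. Second, the identity $F_0=f_0\circ\Pi$.

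I will first make the set equality explicit in both directions. If $f_i({\bf x})\le 0$ for all $i$, then setting $z_i:=\pm\sqrt{-2f_i({\bf x})}$ gives a point $({\bf x},{\bf z})\in{\bf F}^{-1}({\bf 0}_m)$. Conversely, $F_i({\bf x},{\bf z})=0$ forces $f_i({\bf x})=-\tfrac12 z_i^2\le 0$. In particular, the fibre of $\Pi$ over a feasible ${\bf x}$ is the nonempty finite set of sign choices of the $z_i$.

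For (i), let ${\bf x}^*$ solve \eqref{without-equlities-998} and let $({\bf x}^*,{\bf z})\in{\bf F}^{-1}({\bf 0}_m)$ be arbitrary. Take any $({\bf x},{\bf w})\in{\bf F}^{-1}({\bf 0}_m)$. Its projection ${\bf x}$ is feasible for \eqref{without-equlities-998}, so
$$F_0({\bf x},{\bf w})=f_0({\bf x})\ge f_0({\bf x}^*)=F_0({\bf x}^*,{\bf z}).$$

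For (ii), let $({\bf x}^*,{\bf z}^*)$ solve \eqref{new-problem-554}. Then ${\bf x}^*=\Pi({\bf x}^*,{\bf z}^*)$ lies in $\Pi({\bf F}^{-1}({\bf 0}_m))$, hence is feasible. For any feasible ${\bf x}$, I lift it to some $({\bf x},{\bf z})$ using the first step, and obtain
$$f_0({\bf x})=F_0({\bf x},{\bf z})\ge F_0({\bf x}^*,{\bf z}^*)=f_0({\bf x}^*).$$

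There is no real obstacle for global solutions. The only point needing care is the reading of "solution". If local minimizers are intended, the step to watch is continuity of the lifts.

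\begin{itemize}
\item In (ii), I need a neighbourhood $U$ of ${\bf x}^*$ whose feasible points lift into a prescribed neighbourhood of $({\bf x}^*,{\bf z}^*)$. I will use the lift $z_i=\operatorname{sign}(z_i^*)\sqrt{-2f_i({\bf x})}$, choosing the positive root when $z_i^*=0$. This lift is continuous in ${\bf x}$, because $\sqrt{\cdot}$ is continuous on $[0,\infty)$, and it equals ${\bf z}^*$ at ${\bf x}^*$.
\item In (i), the direction is easier. $\Pi$ is continuous, so a neighbourhood of ${\bf x}^*$ pulls back to a neighbourhood of $({\bf x}^*,{\bf z})$, and the same inequality chain applies there.
\end{itemize}

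I would present the global argument and add this remark for the local version.
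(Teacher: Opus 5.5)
Your argument is correct, and its local form matches the paper's proof. The paper reads ``solution'' as local minimizer. It proves (i) using that $\Pi$ is $1$-Lipschitz for the product distance $\rho$, and (ii) using exactly your sign-matched lift $z_i({\bf x})=\varepsilon_i\sqrt{-2f_i({\bf x})}$ (with $\varepsilon_i=+1$ when $z_i^*=0$), made quantitative with a $\delta/2$ split. Since local minimizers are what the paper intends throughout, your local argument should be the main proof rather than a closing remark.
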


\begin{proof}
On $\mathcal{M}\times\R^m$ we use the product
distance
\[
  \rho\bigl(({\bf x},{\bf z}),({\bf x}',{\bf z}')\bigr) := d({\bf x},{\bf x}') + \|{\bf z} - {\bf z}'\|,
\]
where $d$ is the Riemannian distance on $(\mathcal{M},{\bf g})$ and $\|\cdot\|$ is the Euclidean
norm on $\R^m$. This distance induces the product topology and is
equivalent to the Riemannian distance of $\widetilde {\bf g} = {\bf g} \oplus {\bf g}_{\mathrm{Euc}}$
on $\mathcal{M}\times\R^m$.\\
$(i)$ Let ${\bf x}^{*}$ be a solution of the problem \eqref{without-equlities-998}, i.e. there is $\delta>0$
such that
\[
  f_0({\bf x}^{*}) \le f_0({\bf x})\quad
  \text{for all } {\bf x}\in B({\bf x}^{*},\delta)\cap\Pi({\bf F}^{-1}({\bf 0}_{m})).
\]
Fix any ${\bf z}$ with $({\bf x}^{*},{\bf z})\in{\bf F}^{-1}({\bf 0}_{m})$ and let $({\bf x},\widetilde {\bf z})\in {\bf F}^{-1}({\bf 0}_{m})$ satisfy
$\rho\bigl(({\bf x},\widetilde {\bf z}),({\bf x}^{*},{\bf z})\bigr) < \delta$. Then
$d({\bf x},{\bf x}^{*}) \le \rho\bigl(({\bf x},\widetilde {\bf z}),({\bf x}^{*},{\bf z})\bigr) < \delta$, and
${\bf x}\in\Pi({\bf F}^{-1}({\bf 0}_{m}))$. Hence
${\bf x}\in B({\bf x}^{*},\delta)\cap\Pi({\bf F}^{-1}({\bf 0}_{m}))$, and since $F_0$ does not depend on the
slack variable,
\[
  F_0({\bf x}^{*},{\bf z}) = f_0({\bf x}^{*}) \le f_0({\bf x}) = F_0({\bf x},\widetilde {\bf z}).
\]
As $\delta$ does not depend on the chosen ${\bf z}$, every $({\bf x}^{*},{\bf z})\in{\bf F}^{-1}({\bf 0}_{m})$ is a
solution of \eqref{new-problem-554}. 

\medskip

\noindent $(ii)$ Let $({\bf x}^{*},{\bf z}^{*})$ be a solution of the problem \eqref{new-problem-554}, i.e. there is
$\delta>0$ such that
\[
  F_0({\bf x}^{*},{\bf z}^{*}) \le F_0({\bf x},{\bf z})\quad
  \text{for all } ({\bf x},{\bf z})\in {\bf F}^{-1}({\bf 0}_{m}) \text{ with }
  \rho\bigl(({\bf x},{\bf z}),({\bf x}^{*},{\bf z}^{*})\bigr) < \delta.
\]
Since $({\bf x}^{*},{\bf z}^{*})\in {\bf F}^{-1}({\bf 0}_{m})$, we have
${\bf x}^{*} = \Pi({\bf x}^{*},{\bf z}^{*}) \in \Pi({\bf F}^{-1}({\bf 0}_{m}))$.

It remains to show ${\bf x}^{*}$ is a local minimizer of \eqref{without-equlities-998}. 
We lift ${\bf x}$ to a slack vector whose signs match those of ${\bf z}^{*}$.
For $i\in \{1,\dots,m\}$ define
\begin{equation*}\label{eq:lift}
  z_i({\bf x}) := \varepsilon_i\,\sqrt{-2 f_i({\bf x})},
  \qquad
  \varepsilon_i :=
  \begin{cases}
    \text{sgn}(z_i^{*}), & z_i^{*}\neq 0,\\[2pt]
    +1, & z_i^{*}=0,
  \end{cases}
\end{equation*}
for ${\bf x}\in\Pi({\bf F}^{-1}({\bf 0}_{m}))$ in a neighborhood of ${\bf x}^{*}$. We have
$({\bf x},{\bf z}({\bf x}))\in {\bf F}^{-1}({\bf 0}_{m})$, and ${\bf z}({\bf x}^{*}) = {\bf z}^{*}$. 

Each map ${\bf x}\mapsto z_i({\bf x}) = \varepsilon_i\sqrt{-2 f_i({\bf x})}$ is continuous on the
feasible set, since $f_i$ is smooth and $-2 f_i\ge 0$ there. Hence
${\bf x}\mapsto {\bf z}({\bf x})$ is continuous at ${\bf x}^{*}$ with ${\bf z}({\bf x}^{*}) = {\bf z}^{*}$, and we may
choose $\delta' \in \left(0,\frac{\delta}{2}\right]$ such that
\[
  {\bf x}\in\Pi({\bf F}^{-1}({\bf 0}_{m})),\ d({\bf x},{\bf x}^{*}) < \delta'
  ~~\text{implies}~~ \|{\bf z}({\bf x}) - {\bf z}^{*}\| < \tfrac{\delta}{2}.
\]
For such an ${\bf x}$,
\[
  \rho\bigl(({\bf x},{\bf z}({\bf x})),({\bf x}^{*},{\bf z}^{*})\bigr)
  = d({\bf x},{\bf x}^{*}) + \|{\bf z}({\bf x}) - {\bf z}^{*}\|
  < \tfrac{\delta}{2} + \tfrac{\delta}{2} = \delta,
\]
so $({\bf x},{\bf z}({\bf x}))$ lies in the admissible $\delta$-ball of \eqref{new-problem-554}, and
therefore
\[
  F_0({\bf x}^{*},{\bf z}^{*}) = f_0({\bf x}^{*})\le f_0({\bf x}) = F_0({\bf x},{\bf z}({\bf x})).
\]
Thus $f_0({\bf x}^{*}) \le f_0({\bf x})$ for all ${\bf x}\in B(x^{*},\delta')\cap\Pi({\bf F}^{-1}({\bf 0}_{m}))$,
i.e.\ ${\bf x}^{*}$ is a solution of \eqref{without-equlities-998}.
\end{proof}

In \cite{fukushima}, it is analyzed in detail the relation between the classical Lagrange multiplier approach for the optimization problem \eqref{new-problem-554} and the classical Karush-Kuhn-Tucker approach for the optimization problem \eqref{without-equlities-998}.The same work also emphasizes the equivalence of the first- and second-order optimality conditions for these two problems.

\subsection{First order optimality conditions}

First, we address the problem associated with \eqref{new-problem-554}, which consists of determining the critical points of the function $F_0$
 subject to the constraint given by the level set  ${\bf F}^{-1}({\bf 0}_{m})$.
We solve this new problem using the embedded gradient vector field method. For this, we need that a non-empty open subset of  ${\bf F}^{-1}({\bf 0}_{m})$ to be a submanifold  of $\mathcal{M}\times \R^m$, which will be of dimension $\left(\text{dim}(\mathcal{M})+m\right)-m=N-p$. This is ensured by the Regular Value Theorem, possibly after removing the singular points, i.e. points $({\bf x}, {\bf z})$ where the gradients\footnote{To simplify the notations, when we take the gradient of a function we use the induced metric from the ambient space on the manifold where the function is defined without mentioning it explicitly.} $\nabla F_1({\bf x}, {\bf z}),\dots, \nabla F_m({\bf x}, {\bf z})$ are linearly dependent (for the example of a cone we have to remove the origin in order to have the Regular Value Theorem valid).   
We introduce the set of singular points and characterize it in the next lemma,
$${\bf F}^{-1}({\bf 0}_{m})^{\text{sing}}:=\{({\bf x,z})\in {\bf F}^{-1}({\bf 0}_{m})\,|\,\nabla F_1({\bf x}, {\bf z}),\dots, \nabla F_m({\bf x}, {\bf z})\,\,\text{ are linearly dependent}\}.$$

\begin{lemma}\label{singular-points-23}
A point $({\bf x,z})\in{\bf F}^{-1}({\bf 0}_{m})^{\text{sing}}$ if and only if there is $k\in \{1,\dots,m\}$ and $1\leq i_1<\dots<i_k\leq m$ such that $z_{i_1}=\dots=z_{i_k}=0, z_j\neq 0,\,j\notin\{i_1,\dots,i_k\}$ and  $\nabla f_{i_1}({\bf x})$, ..., $\nabla f_{i_k}({\bf x})$ are linearly dependent.
\end{lemma}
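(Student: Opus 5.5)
The plan is to compute the gradients explicitly and reduce linear dependence to a statement about the $\mathcal{M}$-components. With the product metric $\widetilde{\bf g}={\bf g}\oplus{\bf g}_{\mathrm{Euc}}$ on $\mathcal{M}\times\R^m$, and since $F_i({\bf x},{\bf z})=f_i({\bf x})+\frac12 z_i^2$ splits as a sum of a function of ${\bf x}$ and a function of ${\bf z}$, we get
\[
\nabla F_i({\bf x},{\bf z})=\bigl(\nabla f_i({\bf x}),\, z_i{\bf e}_i\bigr)\in T_{\bf x}\mathcal{M}\times\R^m ,
\]
where ${\bf e}_i$ is the $i$-th standard basis vector of $\R^m$. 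A linear relation $\sum_{i=1}^m\lambda_i\nabla F_i({\bf x},{\bf z})=0$ is then equivalent to the system
\[
\sum_{i=1}^m\lambda_i\nabla f_i({\bf x})=0,\qquad \lambda_i z_i=0\ \ (i=1,\dots,m).
\]

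For the direct implication, let $({\bf x},{\bf z})$ be singular with a nontrivial relation $(\lambda_1,\dots,\lambda_m)\neq 0$. Let $I=\{i_1<\dots<i_k\}$ be the set of indices with $z_i=0$. For $j\notin I$ we have $z_j\neq0$, so $\lambda_j=0$. Since $\lambda\neq0$, the set $I$ is nonempty, so $k\geq1$. The first equation then reduces to $\sum_{i\in I}\lambda_i\nabla f_i({\bf x})=0$ with $(\lambda_i)_{i\in I}\neq 0$. Hence $\nabla f_{i_1}({\bf x}),\dots,\nabla f_{i_k}({\bf x})$ are linearly dependent, and by construction the zero pattern of ${\bf z}$ is exactly the one described in the lemma.

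For the converse, take the given index set $I$ with $z_i=0$ for $i\in I$, and a nontrivial relation $\sum_{i\in I}\lambda_i\nabla f_i({\bf x})=0$. Extend it by $\lambda_j=0$ for $j\notin I$. Both conditions of the system hold: the first by assumption, and the second because $\lambda_i=0$ whenever $z_i\neq0$. This gives a nontrivial relation among the $\nabla F_i({\bf x},{\bf z})$, so the point lies in ${\bf F}^{-1}({\bf 0}_m)^{\text{sing}}$.

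The only step needing care is the gradient formula on the product manifold. One must check that the gradient of $F_i$ with respect to $\widetilde{\bf g}$ splits as the pair of the $\mathcal{M}$-gradient of $f_i$ and the Euclidean gradient of $\frac12 z_i^2$. This follows because $dF_i=d f_i\oplus z_i\,dz_i$ and the metric is a direct sum. Here $\nabla f_i$ means the gradient on $(\mathcal{M},{\bf g})$, i.e. the orthogonal projection of the ambient gradient onto $T_{\bf x}\mathcal{M}$, as in the footnote convention. Everything else is linear algebra.
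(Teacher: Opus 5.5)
Your proof is correct and takes essentially the same route as the paper. The paper reads off the block structure of the Jacobian $[D{\bf F}({\bf x},{\bf z})]$, whose rows are $(\nabla f_i({\bf x}), z_i{\bf e}_i)$, and argues via its rank after reordering so that the vanishing slack variables come first; your explicit linear relation $\sum_i\lambda_i\nabla f_i({\bf x})=0$, $\lambda_i z_i=0$ is the kernel version of that rank argument, and it also makes clear why rank deficiency forces some $z_i$ to vanish, a step the paper leaves implicit.
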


\begin{proof} The Jacobian matrix associated to the constraint functions $F_1,\dots, F_m$ is
$$ \left[D{\bf F}({\bf x},{\bf z})\right]=
\left[%
\begin{array}{ccc}
\nabla f_1({\bf x}) & ... & \nabla f_m({\bf x}) \\
z_1 & \dots & 0 \\
  \vdots & \ddots & \vdots \\
 0 & ... & z_m
\end{array}%
\right]^T.
$$
First, we suppose that $\nabla F_1({\bf x}, {\bf z}),\dots, \nabla F_m({\bf x}, {\bf z})$ are linearly dependent. Then, $\text{rank}  \left[D{\bf F}({\bf x},{\bf z})\right]<m$. This in turn implies that there is $k\in \{1,\dots,m\}$ and $1\leq i_1<\dots<i_k\leq m$ such that $z_{i_1}=\dots=z_{i_k}=0, z_j\neq 0,\,j\notin\{i_1,\dots,i_k\}$.  Without loss of generality, we can assume that $i_1=1,\dots, i_k=k$. The Jacobian matrix becomes 
$$ \left[D{\bf F}({\bf x},{\bf z})\right]=
\left[%
\begin{array}{ccc|ccc}
\nabla f_1({\bf x}) & ...& \nabla f_k({\bf x}) & \nabla f_{k+1}({\bf x}) & \dots & \nabla f_m({\bf x})  \\
{\bf 0} & \dots & {\bf 0} & {\bf 0} & \dots & {\bf 0} \\
{0} & \dots & {0} & z_{k+1} & \dots & 0 \\
\vdots & \ddots & \vdots & \vdots & \ddots & \vdots \\
 0 & \dots & 0 & 0 & \dots & z_m 
\end{array}%
\right]^T.
$$
The hypothesis implies that $\nabla f_{1}({\bf x})$, ..., $\nabla f_{k}({\bf x})$ are linearly dependent.

The converse follows immediately from the above form of Jacobian matrix.
\end{proof}

We define the set of regular points as 
$${\bf F}^{-1}({\bf 0}_{m})^{\text{reg}}:={\bf F}^{-1}({\bf 0}_{m})\backslash {\bf F}^{-1}({\bf 0}_{m})^{\text{sing}},$$
which we call the lifted feasible set.

As a consequence of the Regular Value Theorem (for a comprehensive treatment for this fundamental result, see \cite{abraham} or \cite{lee}), we have the following fundamental result. 
\begin{thm}
If~ ${\bf F}^{-1}({\bf 0}_{m})^{\text{reg}}\ne \emptyset$, then it is a manifold. 
\end{thm}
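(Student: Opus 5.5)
The plan is to show that ${\bf F}^{-1}({\bf 0}_{m})^{\text{reg}}$ is the zero level set of a submersion defined on an open subset of the ambient manifold $\mathcal{M}\times\R^m$, and then invoke the Regular Value Theorem. The resulting manifold will be an embedded submanifold of $\mathcal{M}\times\R^m$ of dimension $(N-p)+m-m=N-p$.

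First, define the open set
$$U:=\{({\bf x},{\bf z})\in \mathcal{M}\times\R^m\,|\,\nabla F_1({\bf x},{\bf z}),\dots,\nabla F_m({\bf x},{\bf z}) \text{ are linearly independent}\}.$$
It is open because linear independence of $m$ continuously varying vectors is an open condition. Concretely, $U$ is the set where the Gram determinant $\det\big[\langle \nabla F_i,\nabla F_j\rangle\big]_{i,j}$ is nonzero. This determinant is a continuous, indeed smooth, function on $\mathcal{M}\times\R^m$, since the $F_i$ are smooth and the metric $\widetilde{\bf g}={\bf g}\oplus{\bf g}_{\mathrm{Euc}}$ is smooth.

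Second, observe that by the very definition of the singular set,
$${\bf F}^{-1}({\bf 0}_{m})^{\text{reg}}={\bf F}^{-1}({\bf 0}_{m})\cap U=({\bf F}|_U)^{-1}({\bf 0}_m).$$
Lemma \ref{singular-points-23} gives the explicit description of the excluded points, but it is not actually needed here. Only the definition of ${\bf F}^{-1}({\bf 0}_{m})^{\text{sing}}$ is used.

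Third, show that ${\bf 0}_m$ is a regular value of ${\bf F}|_U:U\to\R^m$. At any point of $U$, the differential $D{\bf F}$ has rank $m$. This is because the gradients $\nabla F_i$ are the $\widetilde{\bf g}$-duals of the differentials $dF_i$, so linear independence of the gradients is equivalent to linear independence of the $dF_i$, which in turn is equivalent to surjectivity of $D{\bf F}$ onto $\R^m$. Hence ${\bf F}|_U$ is a submersion at every point of its zero level set.

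Since $U$ is an open submanifold of the $(N-p+m)$-dimensional manifold $\mathcal{M}\times\R^m$, the Regular Value Theorem (see \cite{abraham} or \cite{lee}) applies. Whenever the level set is nonempty, it is a closed embedded submanifold of $U$, and therefore an embedded submanifold of $\mathcal{M}\times\R^m$, of dimension $N-p$.

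The only point requiring care is the openness of $U$ together with the equivalence between independence of the gradients and surjectivity of the differential; neither is a real obstacle. It is also worth noting that the result gives a manifold structure on ${\bf F}^{-1}({\bf 0}_{m})^{\text{reg}}$ but not on the full level set ${\bf F}^{-1}({\bf 0}_{m})$, since singular points may be accumulation points of regular ones.
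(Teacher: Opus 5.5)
Your proof is correct and takes the same approach as the paper, which gives no separate argument beyond invoking the Regular Value Theorem after removing the singular points. Restricting ${\bf F}$ to the open set $U$ where $\nabla F_1,\dots,\nabla F_m$ are linearly independent is exactly the detail that makes that invocation rigorous, and it yields an embedded submanifold of $\mathcal{M}\times\R^m$ of dimension $N-p$, as the paper asserts.
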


The following partitions hold 
\begin{equation}\label{partition-12}
{\bf F}^{-1}({\bf 0}_{m})^{\text{reg}}={\bf S}_0\cup \bigcup_{1\leq k\leq m}\,\,\bigcup_{1\leq i_1<\dots<i_k\leq m}{\bf S}_{(i_1,\dots,i_k)},
\end{equation}
\begin{equation}\label{partition-pi-22}
\Pi\left({\bf F}^{-1}({\bf 0}_{m})^{\text{reg}}\right)=\Pi{\bf S}_0\cup \bigcup_{1\leq k\leq m}\,\,\bigcup_{1\leq i_1<\dots<i_k\leq m}\Pi{\bf S}_{(i_1,\dots,i_k)},
\end{equation}
where 
\begin{align*}
& {\bf S}_0 =\left\{({\bf x}, {\bf z})\in{\bf F}^{-1}({\bf 0}_{m}) \,|\,z_i\neq 0,\,\,\forall i\in \{1,\dots, m\}\right\}\subset {\bf F}^{-1}({\bf 0}_{m})^{\text{reg}}, \\
 & {\bf S}_{(i_1,\dots,i_k)} =\left\{({\bf x}, {\bf z})\in{\bf F}^{-1}({\bf 0}_{m})^{\text{reg}} \,|\,z_{i_1}=\dots=z_{i_k}=0, z_j\neq 0,\,j\notin\{i_1,\dots,i_k\}\right\}, \\
 & \Pi {\bf S}_0=\left\{{\bf x}\in \mathcal{M}\,|\, f_i({\bf x})<0,\,\forall i\in\{1,\dots,m\}\right\}, \\
& \Pi {\bf S}_{(i_1,\dots,i_k)}=\left\{{\bf x}\in \Pi\left({\bf F}^{-1}({\bf 0}_{m})^{\text{reg}}\right)\,|\,f_{i_1}({\bf x})=\dots=f_{i_k}({\bf x})=0, f_j({\bf x})<0,\,j\notin\{i_1,\dots,i_k\}\right\}.
\end{align*}

Note that for each ${\bf x}\in \Pi {\bf S}_{(i_1,\dots,i_k)}$, the condition that $\nabla f_{i_1}({\bf x})$, ..., $\nabla f_{i_k}({\bf x})$ are linearly independent is satisfied. This condition is called in the literature {\it LICQ} (linear independence constraint qualification), see \cite{nocedal}, Definition 12.4, pp. 320.

\begin{lemma}\label{Lema-manifoalde} Furthermore, we have the following characterization. 
\begin{enumerate}[(i)]
\item Each element of the partition \eqref{partition-12} is a submanifold of\, $\mathcal{M}\times \R^m$.
\item Each element of the partition \eqref{partition-pi-22} is a submanifold of $\,\mathcal{M}$.
\end{enumerate}
\end{lemma}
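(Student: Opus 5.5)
We prove both parts by exhibiting each stratum as an open subset of a regular level set, using the Regular Value Theorem in the same way as for ${\bf F}^{-1}({\bf 0}_{m})^{\text{reg}}$. Fix a multi-index $(i_1,\dots,i_k)$; after relabeling, as in the proof of Lemma \ref{singular-points-23}, we take $i_1=1,\dots,i_k=k$. For ${\bf S}_0$ the argument is immediate. The set $\{({\bf x},{\bf z})\,|\,z_i\neq 0,\ \forall i\}$ is open in $\mathcal{M}\times\R^m$. On it the Jacobian $[D{\bf F}]$ has full rank $m$, because its $\R^m$-block is $\mathrm{diag}(z_1,\dots,z_m)$. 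Hence ${\bf S}_0$ is an open subset of the manifold ${\bf F}^{-1}({\bf 0}_m)^{\text{reg}}$, and therefore a submanifold of dimension $N-p$. Its projection $\Pi{\bf S}_0=\{f_i<0,\ \forall i\}$ is open in $\mathcal{M}$, hence an open submanifold.

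For part (ii) with $k\geq 1$, let $U=\{{\bf x}\in\mathcal{M}\,|\,f_j({\bf x})<0,\ j>k\}$, which is open in $\mathcal{M}$, and consider the map $G=(f_1,\dots,f_k):U\to\R^k$. By the definition of $\Pi{\bf S}_{(1,\dots,k)}$ and Lemma \ref{singular-points-23}, we have
$$\Pi{\bf S}_{(1,\dots,k)}=\{{\bf x}\in U\,|\,G({\bf x})={\bf 0}_k,\ \nabla f_1({\bf x}),\dots,\nabla f_k({\bf x})\ \text{linearly independent}\}.$$
Linear independence of the gradients is an open condition, since it says a Gram determinant is nonzero. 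So this set equals $G|_{V}^{-1}({\bf 0}_k)$, where $V\subset U$ is the open set on which the Gram determinant of $\nabla f_1,\dots,\nabla f_k$ (taken with respect to ${\bf g}$) does not vanish. On $V$, ${\bf 0}_k$ is a regular value of $G|_V$. The Regular Value Theorem then makes $\Pi{\bf S}_{(1,\dots,k)}$ an embedded submanifold of $\mathcal{M}$ of dimension $N-p-k$.

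For part (i), define $H:V\times(\R\setminus\{0\})^{m-k}\to\R^{m-k}$ by $H_j({\bf x},z_{k+1},\dots,z_m)=f_j({\bf x})+\tfrac12 z_j^2$. Then ${\bf S}_{(1,\dots,k)}$ is the set
$$\{({\bf x},{\bf 0}_k,z_{k+1},\dots,z_m)\,|\,G({\bf x})={\bf 0}_k,\ H={\bf 0}_{m-k}\}\subset (V\times\{{\bf 0}_k\}\times(\R\setminus\{0\})^{m-k}).$$
Here $V\times\{{\bf 0}_k\}\times(\R\setminus\{0\})^{m-k}$ is a submanifold of $\mathcal{M}\times\R^m$: it is an open subset of the closed linear slice $\mathcal{M}\times\{{\bf 0}_k\}\times\R^{m-k}$. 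On this slice, consider the combined map $(G,H)$. Its Jacobian has block-triangular form, with blocks $[DG]$ in the ${\bf x}$-directions and $\mathrm{diag}(z_{k+1},\dots,z_m)$ in the $z$-directions. Its rank is $k+(m-k)=m$ because the gradients $\nabla f_1,\dots,\nabla f_k$ are independent on $V$ and $z_j\neq0$. Applying the Regular Value Theorem gives a submanifold of the slice, hence of $\mathcal{M}\times\R^m$, of dimension $N-p-k$. Alternatively, ${\bf S}_{(1,\dots,k)}$ is the graph over $\Pi{\bf S}_{(1,\dots,k)}$ of the smooth map ${\bf x}\mapsto(\varepsilon_j\sqrt{-2f_j({\bf x})})_{j>k}$, taken separately on each sign pattern $\varepsilon$. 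This map is smooth because $f_j<0$ there, and graphs of smooth maps are embedded submanifolds.

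The main obstacle is bookkeeping rather than depth. One must check that the regularity requirement in the definition of ${\bf S}_{(i_1,\dots,i_k)}$, namely membership in ${\bf F}^{-1}({\bf 0}_m)^{\text{reg}}$, translates exactly into the open LICQ condition on $V$. This follows from Lemma \ref{singular-points-23}, since the $z_j$ with $j>k$ are nonzero. One must also confirm that all the sets involved are genuinely open in the right spaces, so that the submanifolds obtained are embedded in $\mathcal{M}\times\R^m$ and $\mathcal{M}$, and not merely in some ambient subset.
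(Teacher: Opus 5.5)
Your proof is correct and follows essentially the paper's route. In both parts the Regular Value Theorem is applied, with the LICQ condition (guaranteed by regularity via Lemma \ref{singular-points-23}) supplying maximal rank, and your part (ii) is exactly the paper's intersection of $(f_{i_1},\dots,f_{i_k})^{-1}({\bf 0}_k)^{\text{reg}}$ with the open set $\{f_j<0,\ j\notin\{i_1,\dots,i_k\}\}$.

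In (i) the difference is cosmetic. The paper adjoins the coordinates $z_1,\dots,z_k$ to ${\bf F}$ and applies the theorem to $\widetilde{\bf F}=({\bf F},z_1,\dots,z_k)$ on all of $\mathcal{M}\times\R^m$. You instead first restrict to the slice $\{z_1=\dots=z_k=0\}$ and to the open set where $z_j\neq 0$ and the Gram determinant is nonzero. Your version makes explicit what the paper leaves implicit: ${\bf S}_{(1,\dots,k)}$ is an entire regular level set there, not merely a relatively open piece of $\widetilde{\bf F}^{-1}({\bf 0}_{m+k})$.
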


\begin{proof}

$(i)$  The set ${\bf S}_0$ is an open subset of ${\bf F}^{-1}({\bf 0}_{m})^{\text{reg}}$, and consequently a submanifold of $\mathcal{M}\times \R^m$. Without loss of generality, we can assume that $i_1=1,\dots, i_k=k$. The set ${\bf S}_{(1,\dots,k)}=\widetilde {\bf F}^{-1}({\bf 0}_{m+k})$, where $\widetilde{\bf F}:=({\bf F},z_1,\dots,z_k):\mathcal{M}\times \R^{m}\to \R^{m+k}$.  For $({\bf x},{\bf z})\in {\bf S}_{(1,\dots, k)}$, the Jacobian matrix is
$$ \left[D\widetilde{\bf F}({\bf x},{\bf z})\right]=
\left[%
\begin{array}{ccc|ccc|c}
\nabla f_1({\bf x}) & ...& \nabla f_k({\bf x}) & \nabla f_{k+1}({\bf x}) & \dots & \nabla f_m({\bf x}) & {\bf 0} \\
{\bf 0} & \dots & {\bf 0} & {\bf 0} & \dots & {\bf 0} & \mathbb{I}_k \\
{0} & \dots & {0} & z_{k+1} & \dots & 0 & 0\\
\vdots & \ddots & \vdots & \vdots & \ddots & \vdots & \vdots \\
 0 & \dots & 0 & 0 & \dots & z_m & {0}
\end{array}%
\right]^T.
$$
By permuting the rows, we obtain maximal rank for $ \left[D\widetilde{\bf F}({\bf x},{\bf z})\right]$ if and only if the vectors $\nabla f_{1}({\bf x})$, ..., $\nabla f_{k}({\bf x})$ are linearly independent. This linear independence is guaranteed by the hypothesis that $({\bf x},{\bf z})\in {\bf F}^{-1}({\bf 0}_{m})^{\text{reg}}$.

$(ii)$  The set $ \Pi {\bf S}_0$ is an open subset of $\mathcal{M}$, and consequently a submanifold of $\mathcal{M}$. The set $(f_{i_1},\dots,f_{i_k})^{-1}({\bf 0}_k)^{\text{reg}}$ is a submanifold of $\mathcal{M}$ because the vectors  $\nabla f_{i_1}({\bf x})$, ..., $\nabla f_{i_k}({\bf x})$ are assumed linearly independent. The set $\Pi {\bf S}_{(i_1,\dots,i_k)}$ is the intersection between the submanifold  $(f_{i_1},\dots,f_{i_k})^{-1}({\bf 0}_k)^{\text{reg}}$ and the open set $\left\{{\bf x}\in \mathcal{M}\,|\, f_j({\bf x})<0,\,j\notin\{i_1,\dots,i_k\}\right\}$.
\end{proof}

{\bf In the sequel, we will work on the manifold} ${\bf F}^{-1}({\bf 0}_{m})^{\text{reg}}$, {\bf and, to simplify  the notation, we will omit the superscript} {\it reg}.

 Finding the critical points of the function $F_0$ subject to the constraints ${\bf F}^{-1}({\bf 0}_{m})$ is equivalent with finding  $({\bf x}, {\bf z})\in {\bf F}^{-1}({\bf 0}_{m})$ such that 
\begin{equation}\label{critical-points-99}
\nabla_{{\bf g}^{\text{ind}}_{{\bf F}^{-1}({\bf 0}_{m})}}\left(F_{0_{|{\bf F}^{-1}({\bf 0}_{m})}}\right)({\bf x}, {\bf z})={\bf 0},
\end{equation}
where ${\bf g}$ is the Riemannian metric on $\mathcal{M}$ and ${\bf g}^{\text{ind}}_{{\bf F}^{-1}({\bf 0}_{m})}$ is the induced metric on ${\bf F}^{-1}({\bf 0}_{m})$.

To solve this constrained optimization problem, we employ the embedded gradient vector field method developed in \cite{birtea-comanescu} and \cite{Birtea-Comanescu-Hessian}. 
It has been shown that for $({\bf x}, {\bf z})\in {\bf F}^{-1}({\bf 0}_{m})$, the following equality holds
\begin{equation}\label{lagrangean-990}
 \nabla_{{\bf g}^{\text{ind}}_{{\bf F}^{-1}({\bf 0}_{m})}}\left(F_{0_{|{\bf F}^{-1}({\bf 0}_{m})}}\right)({\bf x}, {\bf z})=\partial F_0({\bf x}, {\bf z})  := \nabla  F_0({\bf x}, {\bf z})-\sum_{\alpha=1}^m \sigma_{\alpha}({\bf x},{\bf z})  \nabla F_{\alpha}({\bf x}, {\bf z}), 
\end{equation}
where the Lagrange multiplier functions\footnote{These functions are defined on the open subset of regular points of the function ${\bf F}$.} $\sigma_{\alpha}:\mathcal{M}\times \R^m \to \R$ associated with the constraint functions $F_1, \dots, F_m$, are defined by the (Cramer's rule, see \cite{birtea-comanescu} ) formula
\begin{equation*}\label{sigma-101}
\sigma_{\alpha}({\bf x}, {\bf z}):=\frac{\det\left(\sm_{(F_1,\ldots ,F_{\alpha-1},F_{\alpha}, F_{\alpha+1},\dots,F_m)}^{(F_1,\ldots , F_{\alpha-1},F_0, F_{\alpha+1},...,F_m)}({\bf x}, {\bf z})\right)}{\det\left(\sm_{(F_1,\ldots ,F_{\alpha-1},F_{\alpha}, F_{\alpha+1},\dots,F_m)}^{(F_1,\ldots , F_{\alpha-1},F_{\alpha}, F_{\alpha+1},...,F_m)}({\bf x}, {\bf z})\right)},
\end{equation*}
where\footnote{$\widetilde{\bf g}:={\bf g}+{\bf g}_{\text{Euc}}$ is the Riemannian metric on $\mathcal{M}\times \R^m$.}
\begin{equation*}\label{sigma}
\sm_{(f_1,...,f_r)}^{(h_1,...,h_s)}:=\left[%
\begin{array}{cccc}
  \widetilde{\bf g}(\nabla h_1,\nabla f_{1}) & ... & \widetilde{\bf g}(\nabla h_s,\nabla f_{1}) \\
  \vdots & \ddots & \vdots \\
  \widetilde{\bf g}(\nabla h_1,\nabla f_{r}) & ... & \widetilde{\bf g}(\nabla h_s,\nabla f_{r})
\end{array}%
\right].
\end{equation*}

Applying this method to our case, a straightforward computation gives us the embedded gradient vector field
\begin{align*}
\partial F_0({\bf x}, {\bf z})
 =\left( \nabla f_0({\bf x})-\sum_{\alpha=1}^m \sigma_{\alpha}({\bf x},{\bf z})  \nabla f_{\alpha}({\bf x}), - \sigma_{1}({\bf x},{\bf z}) z_1,\dots,  - \sigma_{m}({\bf x},{\bf z}) z_m \right).
\end{align*}
This gradient vector field is defined on the open set of $\mathcal{M}\times \R^m$ given by the regular points of ${\bf F}$, moreover it is tangent to every regular leaf generated by ${\bf F}$, in particular is tangent to ${\bf F}^{-1}({\bf 0}_{m})$.

\begin{rem}
Our choice of sign in \eqref{lagrangean-990} (coming from geometric reasons) are opposite with the standard choice, in the Karush-Kuhn-Tucker optimization problem,  for the ''augmented Lagrangian function'', thus producing a sign inversion for these multipliers. 
\end{rem}

As a consequence of the embedded gradient vector field method, we have a first characterization for the solutions of problem \eqref{critical-points-99}. 

\begin{thm}
\begin{enumerate}[(i)]
\item A point $({\bf x^*}, {\bf z^*})\in {\bf F}^{-1}({\bf 0}_{m})$ is a solution of \eqref{critical-points-99} {\bf if and only if }
\begin{equation*}\label{ecuatii-sigma-121}
\begin{cases}
  \nabla f_0({\bf x}^*)-\sum\limits_{\alpha=1}^m \sigma_{\alpha}({\bf x}^*,{\bf z}^*)  \nabla f_{\alpha}({\bf x}^*)={\bf 0}_{N-p} \\
 \sigma_{\alpha}({\bf x}^*,{\bf z}^*) z^*_{\alpha}=0,\,\,\,\alpha\in \{1,\dots,m\}.
\end{cases}
\end{equation*}
\item A point $({\bf x^*}, {\bf z^*})\in {\bf S}_{(i_1,\dots,i_k)}\subset {\bf F}^{-1}({\bf 0}_{m})$ is a solution of \eqref{critical-points-99} {\bf if and only if }
\begin{equation}\label{ecuatii-sigma-133}
\begin{cases}
  \nabla f_0({\bf x}^*)-\sum\limits_{i\in \{i_1,\dots,i_k\}} \sigma_{i}({\bf x}^*,{\bf z}^*)  \nabla f_{i}({\bf x}^*)={\bf 0}_{N-p} \\
 \sigma_{j}({\bf x}^*,{\bf z}^*)=0,\,\,\,j\notin \{i_1,\dots, i_k\}.
\end{cases}
\end{equation}
\item A point $({\bf x^*}, {\bf z^*})\in {\bf S}_0\subset {\bf F}^{-1}({\bf 0}_{m})$ is a solution of \eqref{critical-points-99} {\bf if and only if }
$
  \nabla f_0({\bf x}^*)={\bf 0}_{N-p}.
$
\end{enumerate}
\end{thm}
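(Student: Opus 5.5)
The plan is to read everything off the embedded gradient identity \eqref{lagrangean-990} together with the explicit component form of $\partial F_0$ computed just before the statement. Since the induced gradient of $F_0$ restricted to ${\bf F}^{-1}({\bf 0}_{m})$ coincides with $\partial F_0$ at points of the (regular) level set, the point $({\bf x}^*,{\bf z}^*)$ is a solution of \eqref{critical-points-99} if and only if $\partial F_0({\bf x}^*,{\bf z}^*)={\bf 0}$. The ambient space is the product $\mathcal{M}\times\R^m$, so its tangent space is $T_{{\bf x}^*}\mathcal{M}\oplus\R^m$. A vector vanishes if and only if both its $T_{{\bf x}^*}\mathcal{M}$-component and its $\R^m$-component vanish. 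The first component is $\nabla f_0({\bf x}^*)-\sum_\alpha \sigma_\alpha\nabla f_\alpha({\bf x}^*)$; the $\alpha$-th slack component is $-\sigma_\alpha z^*_\alpha$. This gives (i) directly.

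For (ii), I take $({\bf x}^*,{\bf z}^*)\in{\bf S}_{(i_1,\dots,i_k)}$. Then $z^*_i=0$ for $i\in\{i_1,\dots,i_k\}$, so the slack equations $\sigma_i z^*_i=0$ are automatically satisfied for the active indices. For $j\notin\{i_1,\dots,i_k\}$ we have $z^*_j\neq0$, and the equation $\sigma_j z^*_j=0$ becomes equivalent to $\sigma_j({\bf x}^*,{\bf z}^*)=0$. Substituting $\sigma_j=0$ for the inactive indices into the first equation of (i) kills those terms of the sum, leaving exactly the sum over the active indices. So the system of (i) is equivalent to \eqref{ecuatii-sigma-133}. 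Both directions follow, since each step is an equivalence once membership in the stratum is fixed.

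For (iii), on ${\bf S}_0$ all $z^*_\alpha\neq0$, so (i) forces $\sigma_\alpha=0$ for every $\alpha$, and the first equation reduces to $\nabla f_0({\bf x}^*)={\bf 0}$. For the converse, I will show that $\nabla f_0({\bf x}^*)=0$ implies that all $\sigma_\alpha$ vanish. In the Cramer quotient defining $\sigma_\alpha$, the numerator Gram matrix has the column $\widetilde{\bf g}(\nabla F_0,\nabla F_\beta)=\langle\nabla f_0,\nabla f_\beta\rangle$, because $F_0$ does not depend on ${\bf z}$. This column is zero when $\nabla f_0({\bf x}^*)=0$, so the determinant vanishes; the denominator is nonzero by regularity. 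Hence $\sigma_\alpha=0$ for every $\alpha$, and the system of (i) holds.

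The only delicate point is the converse direction in (ii). There the $\sigma$'s are defined intrinsically via the Cramer formula and are not free unknowns, so one must make sure that the equivalence is stated at the given point with these specific values. Since each step above is a pointwise equivalence of equations in the values $\sigma_\alpha({\bf x}^*,{\bf z}^*)$, no extra argument is needed; the main work is bookkeeping of the block structure. Similarly, (iii) can alternatively be obtained as the case $k=0$ of (ii) combined with the vanishing-determinant argument above.
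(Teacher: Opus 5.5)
Your proof is correct and takes the route the paper intends. The paper gives no separate argument, presenting the theorem as an immediate consequence of the component form of $\partial F_0$ on $\mathcal{M}\times\R^m$; you read (i) off that form and then specialize using $z^*_i=0$ on active indices and $z^*_j\neq 0$ on inactive ones. Your additional check for the converse of (iii) is also correct: $\nabla f_0({\bf x}^*)={\bf 0}$ makes the $F_0$-column of each Cramer numerator vanish while the Gram denominator is nonzero by regularity, so every $\sigma_\alpha({\bf x}^*,{\bf z}^*)=0$, a detail the paper leaves implicit.
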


In  the literature, the second set of equations in \eqref{ecuatii-sigma-121} is called {\it complementary conditions} of the classical Karush-Kuhn-Tucker approach, see \cite{nocedal}, pp.321.

Our next goal is to eliminate the slack variables $z_1,\dots,z_m$, which corresponds to projecting the embedded gradient vector field $\partial F_0$ along the projection map $\Pi({\bf x}, {\bf z})={\bf x}$. {\bf The issue is that the vector field $\partial F_0$ is not projectable along the map $\Pi$}. 

\begin{thm}\label{critice-mu-11}
A point $({\bf x^*}, {\bf z^*})\in {\bf S}_{(i_1,\dots,i_k)}$ is a solution of \eqref{critical-points-99} (equivalently, a solution of \eqref{ecuatii-sigma-133}) {\bf if and only if } 
${\bf x}^*\in \Pi {\bf S}_{(i_1,\dots,i_k)}=\left\{{\bf x}\in \mathcal{M}\,|\,f_{i_1}({\bf x})=\dots=f_{i_k}({\bf x})=0, f_j({\bf x})<0,\,j\notin\{i_1,\dots,i_k\}\right\}$ is a solution of
\begin{equation*}\label{ecuatie-mu-9997-v2}
\nabla f_0({\bf x}^*)-\sum_{i\in \{i_1,\dots, i_k\}} \mu_{_{i}}({\bf x}^*)  \nabla f_{_{i}}({\bf x}^*)={\bf 0}_{N-p},
\end{equation*}
where the Lagrange multiplier functions\footnote{These functions are defined on the open subset of regular points of the function $(f_{i_1}, \dots, f_{i_k})$.} $\mu_{_{i_{\alpha}}}: \Pi {\bf S}_{(i_1,\dots,i_k)}\to \R$ associated with the active constraints $f_{i_1}, \dots, f_{i_k}$, are given by
\begin{equation}\label{formule-mu-v2}
\mu_{_{i_{\alpha}}}({\bf x}):=\frac{\det\left(\sm_{(f_{i_{_1}},\ldots ,f_{i_{_{\alpha-1}}},f_{i_{_{\alpha}}}, f_{i_{_{\alpha+1}}},\dots,f_{i_{_k}})}^{(f_{i_{_1}},\ldots ,f_{i_{_{\alpha-1}}},f_{_0}, f_{i_{_{\alpha+1}}},\dots,f_{i_{_k}})}\right)}{\det\left(\sm_{(f_{i_{_1}},\ldots ,f_{i_{_{\alpha-1}}},f_{i_{_{\alpha}}}, f_{i_{_{\alpha+1}}},\dots,f_{i_{_k}})}^{(f_{i_{_1}},\ldots ,f_{i_{_{\alpha-1}}},f_{i_{_{\alpha}}}, f_{i_{_{\alpha+1}}},\dots,f_{i_{_k}})}\right)}({\bf x}),\,\,\alpha\in \{1,\dots,k\}.
\end{equation}
Moreover, we have the equalities  $\sigma_i({\bf x^*}, {\bf z^*})=\mu_{i}({\bf x}^*)$, $i\in \{i_1, \dots, i_k\}$, and $ \sigma_{j}({\bf x}^*,{\bf z}^*)=0,\,j\notin \{i_1,\dots, i_k\}$.
\end{thm}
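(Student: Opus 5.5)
Both directions go through a direct comparison of the Gram-matrix systems defining $\sigma_\alpha$ and $\mu_{i_\alpha}$ at points of ${\bf S}_{(i_1,\dots,i_k)}$. Without loss of generality take $i_1=1,\dots,i_k=k$, so $z_1=\dots=z_k=0$ and $z_j\neq 0$ for $j>k$. The Cramer formula for $\sigma$ says that $(\sigma_1,\dots,\sigma_m)$ is the unique solution of the linear system
\[
\sum_{\beta=1}^m \widetilde{\bf g}(\nabla F_\alpha,\nabla F_\beta)\,\sigma_\beta=\widetilde{\bf g}(\nabla F_\alpha,\nabla F_0),\qquad \alpha=1,\dots,m .
\]
Since $\nabla F_\alpha=(\nabla f_\alpha,z_\alpha e_\alpha)$ and $\nabla F_0=(\nabla f_0,{\bf 0})$, we have $\widetilde{\bf g}(\nabla F_\alpha,\nabla F_\beta)={\bf g}(\nabla f_\alpha,\nabla f_\beta)+\delta_{\alpha\beta}z_\alpha^2$ and $\widetilde{\bf g}(\nabla F_\alpha,\nabla F_0)={\bf g}(\nabla f_\alpha,\nabla f_0)$. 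Similarly, $(\mu_1,\dots,\mu_k)$ is the unique solution, by LICQ on $\Pi{\bf S}_{(1,\dots,k)}$, of the $k\times k$ system
\[
\sum_{\beta\le k}{\bf g}(\nabla f_\alpha,\nabla f_\beta)\,\mu_\beta={\bf g}(\nabla f_\alpha,\nabla f_0),\qquad \alpha\le k.
\]
Geometrically, $\sum_\beta\mu_\beta\nabla f_\beta$ is the orthogonal projection of $\nabla f_0({\bf x})$ onto $\mathrm{span}\{\nabla f_1,\dots,\nabla f_k\}$.

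\emph{Forward direction.} Assume $({\bf x}^*,{\bf z}^*)$ solves \eqref{ecuatii-sigma-133}. Then $\sigma_j=0$ for $j>k$, and $\nabla f_0=\sum_{i\le k}\sigma_i\nabla f_i$ at ${\bf x}^*$. Taking the ${\bf g}$-inner product of this identity with $\nabla f_\alpha$ for $\alpha\le k$ shows that $(\sigma_1,\dots,\sigma_k)$ solves the $\mu$-system. By uniqueness, $\sigma_i({\bf x}^*,{\bf z}^*)=\mu_i({\bf x}^*)$ for $i\le k$. Substituting back gives $\nabla f_0({\bf x}^*)-\sum_{i\le k}\mu_i({\bf x}^*)\nabla f_i({\bf x}^*)={\bf 0}$. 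Moreover ${\bf x}^*\in\Pi{\bf S}_{(1,\dots,k)}$ by definition of the stratum.

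\emph{Converse.} Take ${\bf x}^*\in\Pi{\bf S}_{(1,\dots,k)}$ satisfying the $\mu$-equation, and choose the lift $z_i^*=0$ for $i\le k$, $z_j^*=\pm\sqrt{-2f_j({\bf x}^*)}\neq0$ for $j>k$; this lies in ${\bf S}_{(1,\dots,k)}$. The key step is to exhibit the candidate vector
\[
\tau:=(\mu_1({\bf x}^*),\dots,\mu_k({\bf x}^*),0,\dots,0)
\]
and check that it solves the $m\times m$ $\sigma$-system. For rows $\alpha\le k$ we have $z_\alpha=0$, so the row reduces to the $\mu$-system and holds. For rows $j>k$ the left side equals $\sum_{\beta\le k}{\bf g}(\nabla f_j,\nabla f_\beta)\mu_\beta={\bf g}\bigl(\nabla f_j,\sum_{\beta\le k}\mu_\beta\nabla f_\beta\bigr)$; the diagonal term $z_j^2\tau_j$ vanishes because $\tau_j=0$. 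By the assumed $\mu$-equation this equals ${\bf g}(\nabla f_j,\nabla f_0)$, as required. Regularity of $({\bf x}^*,{\bf z}^*)$ makes the $\sigma$-Gram matrix invertible, so $\sigma=\tau$. Then both conditions of \eqref{ecuatii-sigma-133} hold, and the stated equalities between $\sigma$ and $\mu$ follow.

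\emph{Main obstacle.} The only delicate point is the rows $j>k$ in the converse. There the identity ${\bf g}(\nabla f_j,\nabla f_0)={\bf g}(\nabla f_j,\sum\mu_\beta\nabla f_\beta)$ is not automatic, since it generally fails off the critical set. This is exactly why $\partial F_0$ is not projectable along $\Pi$. So the hypothesis (the $\mu$-equation) must be invoked precisely at that step.
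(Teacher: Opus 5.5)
Your proof is correct and follows the paper's argument. The forward direction is identical: pair the stationarity identity with $\nabla f_{i_1},\dots,\nabla f_{i_k}$ and use uniqueness for the nondegenerate Gram system. In the converse, your check that $(\mu_1,\dots,\mu_k,0,\dots,0)$ solves the full $m\times m$ system rests on the same fact the paper uses: the $j$-th column of the numerator of $\sigma_j$ is a combination of the first $k$ columns, so $\sigma_j=0$, and $\sigma_i=\mu_i$ is then read off from the first $k$ rows. Since ${\bf z}^*$ enters only through $(z_j^*)^2$, your argument covers every point $({\bf x}^*,{\bf z}^*)\in{\bf S}_{(i_1,\dots,i_k)}$ over ${\bf x}^*$, not just the lift you chose, which is what the statement requires.
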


\begin{proof}
First we prove the implication $''\Rightarrow''$. Let $({\bf x^*}, {\bf z^*})\in {\bf S}_{(i_1,\dots,i_k)}$ be a solution of \eqref{ecuatii-sigma-133} which implies the following vector equality
$$\sum\limits_{i\in \{i_1,\dots,i_k\}} \sigma_{i}({\bf x}^*,{\bf z}^*)  \nabla f_{i}({\bf x}^*)= \nabla f_0({\bf x}^*).$$

By scalar multiplying each member of the above vector equality with $\nabla f_{i_1}({\bf x}^*), \dots, \nabla f_{i_k}({\bf x}^*)$, we obtain that the vector $\left( \sigma_{i_1}({\bf x}^*,{\bf z}^*), \dots,  \sigma_{i_k}({\bf x}^*,{\bf z}^*)\right)$ is a solution of the linear system
$$\sm_{f_{i_1},\dots, f_{i_k}}^{f_{i_1},\dots, f_{i_k}}({\bf x}^*) \left[%
\begin{array}{c}
\sigma_{i_1}({\bf x}^*,{\bf z}^*) \\
\vdots \\
\sigma_{i_k}({\bf x}^*,{\bf z}^*)
\end{array}%
\right]= 
\left[%
\begin{array}{c}
{\bf g}(\nabla f_0({\bf x}^*), \nabla f_{i_1}({\bf x}^*)) \\
\vdots \\
{\bf g}(\nabla f_0({\bf x}^*), \nabla f_{i_k}({\bf x}^*))
\end{array}%
\right].
 $$
 By our hypotheses, the matrix $\sm_{f_{i_1},\dots, f_{i_k}}^{f_{i_1},\dots, f_{i_k}}({\bf x}^*)$ is non-degenerate and consequently, by Cramer's rule, the vector $\left( \mu_{i_1}({\bf x}^*), \dots,  \mu_{i_k}({\bf x}^*)\right)$ is the unique solution, where $\mu_{_{i_{\alpha}}}({\bf x}^*)$ are given by \eqref{formule-mu-v2}. 
 
 For the converse implication $''\Leftarrow''$, we assume without loss of generality that $i_1=1,\dots i_k=k$. For $({\bf x},{\bf z})\in {\bf F}^{-1}({\bf 0}_{m})$, we have the computation:
\begin{align*}
\sigma_{\alpha}({\bf x}, {\bf z}) & = \frac{\det\left(\sm_{(f_1,\ldots ,f_{\alpha-1},f_{\alpha}, f_{\alpha+1},\dots,f_m)}^{(f_1,\ldots , f_{\alpha-1},f_0, f_{\alpha+1},...,f_m)}({\bf x})+\text{diag}(z_1^2, \dots, z_{\alpha-1}^2,0,z_{\alpha+1}^2,\dots,z_{m}^2)\right)}{\det\left(\sm_{(f_1,\ldots ,f_{\alpha-1},f_{\alpha}, f_{\alpha+1},\dots,f_m)}^{(f_1,\ldots , f_{\alpha-1},f_{\alpha}, f_{\alpha+1},...,f_m)}({\bf x})+\text{diag}(z_1^2, \dots, z_{\alpha-1}^2,z_{\alpha}^2,z_{\alpha+1}^2,\dots,z_{m}^2)\right)}, \,\,\alpha\in \{1,\dots m\}.
\end{align*}
For ${\bf x}^*\in \Pi {\bf S}_{(i_1,\dots,i_k)}$ and ${\bf z}^*\in\R^m$ so that $({\bf x^*}, {\bf z^*})\in {\bf S}_{(i_1,\dots,i_k)}\subset {\bf F}^{-1}({\bf 0}_{m})$, we have 
 \begin{align*}
\sigma_{j}({\bf x}^*, {\bf z}^*) & = \frac{\det\left(\sm_{(f_1,\ldots ,f_{k},f_{k+1},\dots, f_{j},\dots, f_m)}^{(f_1,\ldots , f_{k},f_{k+1}, \dots, f_{0},...,f_m)}({\bf x}^*)+\text{diag}(0, \dots, 0,z_{k+1}^2,\dots, 0, \dots, z_{m}^2)\right)}{\det\left(\sm_{(f_1,\ldots ,f_{k},f_{k+1},\dots, f_{j},\dots, f_m)}^{(f_1,\ldots , f_{k},f_{k+1}, \dots, f_{j},...,f_m)}({\bf x}^*)+\text{diag}(0, \dots, 0,z_{k+1}^2,\dots, z_j^2, \dots, z_{m}^2)\right)},
\end{align*}
for all $j\notin\{1,\dots,k\}$. In the matrix of the numerator the $j$-th column is a linear combination of the first $k$ columns. This fact is a consequence of equation \eqref{ecuatie-mu-9997-v2} and scalar multiplication argument as above. 

By the definition of the Lagrange multiplier functions, the vector $\left( \sigma_{1}({\bf x},{\bf z}), \dots,  \sigma_{m}({\bf x},{\bf z})\right)$ is a solution of the linear system
$$\sm_{F_{1},\dots, F_{m}}^{F_{1},\dots, F_{m}}({\bf x},{\bf z}) \left[%
\begin{array}{c}
\sigma_{1}({\bf x},{\bf z}) \\
\vdots \\
\sigma_{m}({\bf x},{\bf z})
\end{array}%
\right]= 
\left[%
\begin{array}{c}
{\bf g}(\nabla F_0({\bf x},{\bf z}), \nabla F_{1}({\bf x},{\bf z})) \\
\vdots \\
{\bf g}(\nabla F_0({\bf x},{\bf z}), \nabla F_{m}({\bf x},{\bf z}))
\end{array}%
\right].
 $$
 For $({\bf x^*}, {\bf z^*})\in {\bf S}_{(i_1,\dots,i_k)}\subset {\bf F}^{-1}({\bf 0}_{m})$, the above system becomes
 $$
 \left[%
 \begin{array}{cc}
 \sm_{f_{1},\dots, f_{k}}^{f_{1},\dots, f_{k}}({\bf x}^*) & A({\bf x^*}, {\bf z^*}) \\
 A^T({\bf x^*}, {\bf z^*}) & B({\bf x^*}, {\bf z^*})
 \end{array}
 \right]
 \left[%
\begin{array}{c}
\sigma_{1}({\bf x}^*,{\bf z}^*) \\
\vdots \\
\sigma_{k}({\bf x}^*,{\bf z}^*) \\
0 \\
\vdots \\
0
\end{array}%
\right]= 
\left[%
\begin{array}{c}
{\bf g}(\nabla f_0({\bf x}^*), \nabla f_{1}({\bf x}^*)) \\
\vdots \\
{\bf g}(\nabla f_0({\bf x}^*), \nabla f_{k}({\bf x}^*)) \\
{\bf g}(\nabla f_0({\bf x}^*), \nabla f_{k+1}({\bf x}^*)) \\
\vdots \\
{\bf g}(\nabla f_0({\bf x}^*), \nabla f_{m}({\bf x}^*)) \\
\end{array}%
\right].
 $$
 Consequently, the vector $\left( \sigma_{1}({\bf x}^*,{\bf z}^*), \dots,  \sigma_{k}({\bf x}^*,{\bf z}^*)\right)$ is a solution of the linear system
$$\sm_{f_{1},\dots, f_{k}}^{f_{1},\dots, f_{k}}({\bf x}^*) \left[%
\begin{array}{c}
\sigma_{1}({\bf x}^*,{\bf z}^*) \\
\vdots \\
\sigma_{k}({\bf x}^*,{\bf z}^*)
\end{array}%
\right]= 
\left[%
\begin{array}{c}
{\bf g}(\nabla f_0({\bf x}^*), \nabla f_{1}({\bf x}^*)) \\
\vdots \\
{\bf g}(\nabla f_0({\bf x}^*), \nabla f_{k}({\bf x}^*))
\end{array}%
\right],
 $$
which is the same linear system verified by the vector $\left( \mu_{1}({\bf x}^*), \dots,  \mu_{k}({\bf x}^*)\right)$. By the uniqueness of the solution, we obtain the announced result. 
 \end{proof}

\subsection{Second order optimality conditions} 

    First we discuss the second order conditions for the optimization problem \eqref{new-problem-554}. We need to compute $\text{Hess}\left(F_{0_{|{\bf F}^{-1}({\bf 0}_{m})}}\right)$. According to the result proved in \cite{Birtea-Comanescu-Hessian}, for every $({\bf x}, {\bf z})\in {\bf F}^{-1}({\bf 0}_{m})$,  we have the following formula
$$\text{Hess}\left(F_{0_{|{\bf F}^{-1}({\bf 0}_{m})}}\right)({\bf x}, {\bf z})  := \left(\text{Hess}  F_0({\bf x}, {\bf z})-\sum_{\alpha=1}^m \sigma_{\alpha}({\bf x},{\bf z})  \text{Hess} F_{\alpha}({\bf x}, {\bf z}) \right)_{{|T_{({\bf x,z})}{\bf F}^{-1}({\bf 0}_{m})\times T_{({\bf x,z})}{\bf F}^{-1}({\bf 0}_{m})}}.$$

The tangent space is defined by
\begin{align*}
T_{({\bf x,z})}{\bf F}^{-1}({\bf 0}_{m}) & =\left\{({\bf v}, {\bf w})\in T_{\bf x}\mathcal{M}\times \R^m\,|\,({\bf v}, {\bf w})\perp \nabla F_{\alpha}({\bf x,z}),\,\,\forall \alpha\in \{1,\dots,m\}\right\}\\
& = \left\{({\bf v}, {\bf w})\in T_{\bf x}\mathcal{M}\times \R^m\,|\,{\bf g}({\bf v},\nabla f_{\alpha}({\bf x}))+w_{\alpha}z_{\alpha}=0,\,\,\forall \alpha\in \{1,\dots,m\}\right\},
\end{align*}
where we use the canonical basis for $\R^m$ and write ${\bf w}=(w_1, \dots, w_m)^T$. 

Fixing a basis for $T_{\bf x}\mathcal{M}$, we have the following matrix form 
\begin{align*}
& \left[\text{Hess}\left(F_{0_{|{\bf F}^{-1}({\bf 0}_{m})}}\right)({\bf x}, {\bf z})\right]= \\
& \left[%
\begin{array}{c|ccc}
\text{Hess} f_0({\bf x})-\sum\limits_{\alpha=1}^m \sigma_{\alpha}({\bf x},{\bf z})  \text{Hess} f_{\alpha}({\bf x}) &  & {\bf 0} &   \\
\hline
 & -\sigma_1({\bf x},{\bf z}) & \dots & 0 \\
 {\bf 0} & \vdots & \ddots & \vdots \\
&  0 & \dots & -\sigma_{m}({\bf x},{\bf z})
\end{array}%
\right]_{{|T_{({\bf x,z})}{\bf F}^{-1}({\bf 0}_{m})\times T_{({\bf x,z})}{\bf F}^{-1}({\bf 0}_{m})}}.
\end{align*}
For a tangent vector $({\bf v}, {\bf w})\in T_{({\bf x,z})}{\bf F}^{-1}({\bf 0}_{m})$, we have 
\begin{align}
 [{\bf v}^T, {\bf w}^T] & \left[\text{Hess}\left(F_{0_{|{\bf F}^{-1}({\bf 0}_{m})}}\right)({\bf x}, {\bf z})\right]  \left[%
\begin{array}{c}
{\bf v} \\
{\bf w} \end{array}%
\right] =\nonumber \\
& [{\bf v}^T] \left[\text{Hess} f_0({\bf x})-\sum\limits_{\alpha=1}^m \sigma_{\alpha}({\bf x},{\bf z})  \text{Hess} f_{\alpha}({\bf x})\right] \left[%
\begin{array}{c}
{\bf v}  \end{array}%
\right]-\sum\limits_{\alpha=1}^m \sigma_{\alpha}({\bf x},{\bf z})w^2_{\alpha}\label{relatie-hessiene}
\end{align}

\begin{thm}
Let $({\bf x^*}, {\bf z^*})\in {\bf S}_{(i_1,\dots,i_k)}$ be a solution of \eqref{critical-points-99}. Then 
\begin{enumerate}[(i)]
\item The matrix $ \left[\emph{Hess}\left(F_{0_{|{\bf F}^{-1}({\bf 0}_{m})}}\right)({\bf x}^*, {\bf z}^*)\right]$ is positive semi-definite {\bf if and only if} the matrix $$\left[\emph{Hess} f_0({\bf x}^*)-\sum\limits_{i\in \{i_1, \dots, i_k\}} \mu_{i}({\bf x}^*)  \emph{Hess} f_{i}({\bf x}^*)\right]_{{|T_{{\bf x}^*}\Pi {\bf S}_{(i_1,\dots,i_k)}\times T_{{\bf x}^*}\Pi {\bf S}_{(i_1,\dots,i_k)}}} $$ 
is positive semi-definite and $\mu_{i}({\bf x}^*)\leq 0$, for all $i\in \{i_1, \dots, i_k\}$.
\item The matrix $ \left[\emph{Hess}\left(F_{0_{|{\bf F}^{-1}({\bf 0}_{m})}}\right)({\bf x}^*, {\bf z}^*)\right]$ is positive definite {\bf if and only if} the matrix $$\left[\emph{Hess} f_0({\bf x}^*)-\sum\limits_{i\in \{i_1, \dots, i_k\}} \mu_{i}({\bf x}^*)  \emph{Hess} f_{i}({\bf x}^*)\right]_{{|T_{{\bf x}^*}\Pi {\bf S}_{(i_1,\dots,i_k)}\times T_{{\bf x}^*}\Pi {\bf S}_{(i_1,\dots,i_k)}}}$$ is positive definite and $\mu_{i}({\bf x}^*)< 0$, for all $i\in \{i_1, \dots, i_k\}$.
\end{enumerate}
\end{thm}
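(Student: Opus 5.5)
The plan is to compute the quadratic form \eqref{relatie-hessiene} at $({\bf x}^*,{\bf z}^*)$ explicitly and to describe the tangent space $T_{({\bf x}^*,{\bf z}^*)}{\bf F}^{-1}({\bf 0}_m)$ as a direct sum. This should decouple the form into a piece living on $T_{{\bf x}^*}\Pi{\bf S}_{(i_1,\dots,i_k)}$ and a diagonal piece in the free slack directions. Without loss of generality take $i_1=1,\dots,i_k=k$.

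By Theorem \ref{critice-mu-11} we know $\sigma_i({\bf x}^*,{\bf z}^*)=\mu_i({\bf x}^*)$ for $i\le k$ and $\sigma_j({\bf x}^*,{\bf z}^*)=0$ for $j>k$. Hence \eqref{relatie-hessiene} becomes
$$
Q({\bf v},{\bf w})={\bf v}^T\Big[\text{Hess} f_0({\bf x}^*)-\sum_{i=1}^k\mu_i({\bf x}^*)\text{Hess} f_i({\bf x}^*)\Big]{\bf v}-\sum_{i=1}^k\mu_i({\bf x}^*)w_i^2 .
$$
Next we describe the tangent space. Since $z^*_i=0$ for $i\le k$ and $z^*_j\neq0$ for $j>k$, the tangency conditions read ${\bf g}({\bf v},\nabla f_i({\bf x}^*))=0$ for $i\le k$, with $w_1,\dots,w_k$ free. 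The remaining components are forced: $w_j=-{\bf g}({\bf v},\nabla f_j({\bf x}^*))/z_j^*$ for $j>k$. Thus the map $({\bf v},{\bf w})\mapsto({\bf v},w_1,\dots,w_k)$ is a linear isomorphism from $T_{({\bf x}^*,{\bf z}^*)}{\bf F}^{-1}({\bf 0}_m)$ onto $V\times\R^k$, where $V=\{{\bf v}\in T_{{\bf x}^*}\mathcal{M}\,|\,{\bf g}({\bf v},\nabla f_i({\bf x}^*))=0,\ i\le k\}$. We then identify $V$ with $T_{{\bf x}^*}\Pi{\bf S}_{(1,\dots,k)}$ using Lemma \ref{Lema-manifoalde}(ii): $\Pi{\bf S}_{(1,\dots,k)}$ is an open subset of the regular level set $(f_1,\dots,f_k)^{-1}({\bf 0}_k)$, whose tangent space is exactly the orthogonal complement of the active gradients. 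This is consistent with the dimension count $N-p-k$ for $V$ plus $k$ free slack directions, giving $N-p$. The point to watch is that $Q$ does not involve $w_j$ for $j>k$, so the forced components are harmless.

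The form therefore splits as $Q=Q_1({\bf v})+\sum_{i\le k}(-\mu_i)w_i^2$ on the product $V\times\R^k$, with no cross terms. For (i), if $Q\ge0$, taking ${\bf w}$-components zero (i.e.\ $w_i=0$ for $i\le k$ and the forced $w_j$) gives $Q_1\ge0$ on $V$. Taking ${\bf v}=0$ and $w=e_i$ gives $-\mu_i\ge0$. The converse is immediate from the sum decomposition. For (ii), the same two test vectors give strict inequalities for nonzero inputs. Conversely, if $({\bf v},w_1,\dots,w_k)\neq0$, then at least one summand is strictly positive and the others are nonnegative. By the isomorphism, this is the same as $({\bf v},{\bf w})\neq0$.

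The only delicate step is the tangent space bookkeeping. One must check carefully that the isomorphism is correct, in particular that a nonzero tangent vector cannot have ${\bf v}=0$ and $w_1=\dots=w_k=0$; this holds because the forced $w_j$ then vanish. One must also justify that the restricted Hessian of $F_0$ is computed with respect to the same basis convention used in \eqref{relatie-hessiene}. Everything else is the direct-sum argument above.
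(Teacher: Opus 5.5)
Your proposal is correct and follows essentially the same route as the paper: you substitute $\sigma_i=\mu_i$ for the active indices and $\sigma_j=0$ for the inactive ones from Theorem~\ref{critice-mu-11}, and you split the tangent space into ${\bf v}\in T_{{\bf x}^*}\Pi{\bf S}_{(i_1,\dots,i_k)}$ plus free active slack components. You then test the decoupled quadratic form on the two kinds of vectors. Your explicit handling of the forced inactive components $w_j=-{\bf g}({\bf v},\nabla f_j({\bf x}^*))/z_j^*$ is slightly more careful than the paper's test vector $({\bf v},{\bf 0})$, which is not literally tangent; this is harmless for the reason you give, namely that those components do not enter the form.
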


\begin{proof} From Theorem \ref{critice-mu-11}, for a critical point $({\bf x^*}, {\bf z^*})\in {\bf S}_{(i_1,\dots,i_k)}$ (i.e. a solution of \eqref{critical-points-99}), we have the equalities $\sigma_i({\bf x^*}, {\bf z^*})=\mu_{i}({\bf x}^*)$, $i\in \{i_1, \dots, i_k\}$, and $ \sigma_{j}({\bf x}^*,{\bf z}^*)=0,\,j\notin \{i_1,\dots, i_k\}$.

A vector $({\bf v}, {\bf w})\in T_{({\bf x}^*,{\bf z}^*)}{\bf F}^{-1}({\bf 0}_{m})$ is characterized by the equations 
\begin{align*}
 & {\bf g}({\bf v},\nabla f_{i}({\bf x}^*))=0,\,\,\forall i\in \{i_1,\dots, i_k\};
 \\
& {\bf g}({\bf v},\nabla f_{j}({\bf x}^*))+w_{j}z^*_{j}=0,\,\,\forall j\notin \{i_1,\dots, i_k\}.
\end{align*}
It is important to discuss the geometric interpretation of the above two equations. The first equation gives us a restriction for the tangent vector ${\bf v}$, more precisely ${\bf v}\in T_{{\bf x}^*}\Pi {\bf S}_{(i_1,\dots,i_k)}\subset T_{{\bf x}^*}\mathcal{M}$ and no restriction in the components $w_{i_1}, \dots, w_{i_k}$ of the vector ${\bf w}$. The second equation gives us a restriction for the components $w_j$ of ${\bf w}$ and no further restriction for the vector ${\bf v}$. 

The relation \eqref{relatie-hessiene} becomes
\begin{align*}
 [{\bf v}^T, {\bf w}^T] & \left[\text{Hess}\left(F_{0_{|{\bf F}^{-1}({\bf 0}_{m})}}\right)({\bf x}^*, {\bf z}^*)\right]  \left[%
\begin{array}{c}
{\bf v} \\
{\bf w} \end{array}%
\right] =\nonumber \\
& [{\bf v}^T] \left[\text{Hess} f_0({\bf x}^*)-\sum\limits_{i\in \{i_1, \dots, i_k\}} \mu_{i}({\bf x}^*)  \text{Hess} f_{i}({\bf x}^*)\right] \left[%
\begin{array}{c}
{\bf v}  \end{array}%
\right]-\sum\limits_{i\in \{i_1, \dots, i_k\}} \mu_{i}({\bf x}^*)w^2_{i}.
\end{align*}
Applying the above formula for the cases $({\bf v}, {\bf 0})$ with ${\bf v}\in T_{{\bf x}^*}\Pi {\bf S}_{(i_1,\dots,i_k)}\subset T_{{\bf x}^*}\mathcal{M}$ and $({\bf 0}, {\bf w})$ with $w_{i_1}, \dots, w_{i_k}$ arbitrary, we obtain the announced results. 
\end{proof}

\begin{figure}
\centering
\begin{tikzpicture}[
    box/.style={
        draw,
        line width=1pt,
        minimum height=1.4cm,
        align=center,
        text width=6cm,
        inner sep=6pt
    }
]

\node[box, minimum width=12cm] (A) at (0,0) {
$\left.\begin{array}{cc}
\hbox{argmin}& f_0({\bf x})\\
{\bf x}\in \mathcal{M}\\
\hbox{s.t.}&f_1({\bf x})\leq 0\\
&\vdots\\
&f_m({\bf x})\leq 0
\end{array}\right.$
};

\node at (0,-2) {$\scalebox{2}{$\Updownarrow$}$};

\node[box, minimum width=12cm] (B) at (0,-3.1) {
$\left.\begin{array}{cc}
\hbox{argmin}& F_0({\bf x,z})\\
({\bf x,z})\in {\bf F}^{-1}({\bf 0}_m)&
\end{array}\right.$
};

\node at (-3.8,-4.2) {$\scalebox{2}{$\Uparrow$}$};

\node at (3.8,-4.2) {$\scalebox{2}{$\Downarrow$}$};

\node[box, minimum width=5.3cm, minimum height=2.4cm] at (-3.8,-5.9) {
    $({\bf x}^*,{\bf z}^*)\in {\bf F}^{-1}({\bf 0}_m)$\\
    $\hbox{s.t.}$\\
    $\nabla\left(F_{0_{|{\bf F}^{-1}({\bf 0}_{m})}}\right)({\bf x}^*, {\bf z}^*)={\bf 0}$\\
    $\hbox{Hess}\left(F_{0_{|{\bf F}^{-1}({\bf 0}_{m})}}\right)({\bf x}^*, {\bf z}^*)>0 $
};

\node[box, minimum width=5.3cm, minimum height=2.4cm] at (3.8,-5.9) {
    $({\bf x}^*,{\bf z}^*)\in {\bf F}^{-1}({\bf 0}_m)$\\
    $\hbox{s.t.}$\\
    $\nabla\left(F_{0_{|{\bf F}^{-1}({\bf 0}_{m})}}\right)({\bf x}^*, {\bf z}^*)={\bf 0}$\\
    $\hbox{Hess}\left(F_{0_{|{\bf F}^{-1}({\bf 0}_{m})}}\right)({\bf x}^*, {\bf z}^*)\geq 0 $
};

\node at (-3.8,-7.7) {$\scalebox{2}{$\Updownarrow$}$};

\node at (3.8,-7.7) {$\scalebox{2}{$\Updownarrow$}$};

\node[box, minimum width=5.2cm, minimum height=2.4cm] at (-3.8,-10.05) {
    $({\bf x}^*,{\bf z}^*)\in {\bf F}^{-1}({\bf 0}_m)$\\
    $\hbox{s.t.}$\\
    $\left(\nabla  F_0-\sum\limits_{\alpha=1}^m \sigma_{\alpha}  \nabla F_{\alpha}\right)\left({\bf x}^*,{\bf z}^*\right)=0$\\
   {\footnotesize  $\left(\hbox{Hess}  F_0-\sum\limits_{\alpha=1}^m \sigma_{\alpha}  \hbox{Hess} F_{\alpha} \right)_{|\mathcal{T}\times \mathcal{T}}
    ({\bf x}^*, {\bf z}^*)>0 
    $} \\
   ~\\
    $\mathcal{T}:=T_{({\bf x^*,z^*})}{\bf F}^{-1}({\bf 0}_{m})$
};

\node[box, minimum width=5.2cm, minimum height=2.4cm] at (3.8,-10.05) {
    $({\bf x}^*,{\bf z}^*)\in {\bf F}^{-1}({\bf 0}_m)$\\
    $\hbox{s.t.}$\\
    $\left(\nabla  F_0-\sum\limits_{\alpha=1}^m \sigma_{\alpha}  \nabla F_{\alpha}\right)\left({\bf x}^*,{\bf z}^*\right)=0$\\
  {\footnotesize  $\left(\hbox{Hess}  F_0-\sum\limits_{\alpha=1}^m \sigma_{\alpha}  \hbox{Hess} F_{\alpha} \right)_{|\mathcal{T}\times \mathcal{T}}({\bf x}^*, {\bf z}^*)\geq 0
    $} \\
      ~\\
    $\mathcal{T}:=T_{({\bf x^*,z^*})}{\bf F}^{-1}({\bf 0}_{m})$
};

\node at (-3.8,-12.4) {$\scalebox{2}{$\Updownarrow$}$};

\node at (3.8,-12.4) {$\scalebox{2}{$\Updownarrow$}$};

\node[box, minimum width=6cm, minimum height=2.4cm] at (-3.8,-15.3) {
    ${\bf x^*}\in \Pi{\bf S}_{(i_1,\dots,i_k)}$\\
    $\hbox{s.t.}$\\
    $\left(\nabla  f_0-\sum\limits_{i\in I
    } \mu_i\nabla f_i\right)({\bf x}^*)=0$\\
    {$\footnotesize \left(\hbox{Hess}  f_0-\sum\limits_  {i\in I
    } \mu_i\hbox{Hess} f_{i} \right)_{|\Theta\times \Theta}({\bf x}^*)> 0
    $}\\
    ~\\
    $\Theta:=T_{{\bf x^*}}\Pi{\bf S}_{(i_1,\dots,i_k)}$ \\
    ~\\
    $\mu_i({\bf x}^*)<0,~~(\forall) i\in I:=\{i_1,\dots,i_k\} $
};

\node[box, minimum width=6cm, minimum height=2.4cm] at (3.8,-15.3) {
    ${\bf x^*}\in \Pi{\bf S}_{(i_1,\dots,i_k)}$\\
    $\hbox{s.t.}$\\
    $\left(\nabla  f_0-\sum\limits_{i\in I
    } \mu_i\nabla f_i\right)({\bf x}^*)=0$\\
    {\footnotesize $\left(\hbox{Hess}  f_0-\sum\limits_  {i\in I
    } \mu_i\hbox{Hess} f_{i} \right)_{|\Theta\times \Theta}({\bf x}^*)\geq  0
    $}\\
    ~\\
     $\Theta:=T_{{\bf x^*}}\Pi{\bf S}_{(i_1,\dots,i_k)}$ \\
     ~\\
    $\mu_i({\bf x}^*)\leq 0,~~(\forall) i\in I:=\{i_1,\dots,i_k\} $
};
\end{tikzpicture}
\caption{Overview of the necessary, respectively sufficient conditions for the argmin problem based on first- and second-order tests.}
\end{figure}

\newpage

\subsection{Algorithmic procedure for searching local minima}

Based on the previous first- and second-order results, in what follows we describe the succession of steps in solving the argmin problem \eqref{without-equlities-998}.\medskip 

{\bf I.} Determine the manifold ${\bf F}^{-1}({\bf 0}_{m})^{\text{reg}}$ and the elements of the partitions \eqref{partition-12} and \eqref{partition-pi-22}.

{\bf II.} Solve the problem on the open stratum $\Pi {\bf S}_0=\{{\bf x}\in \mathcal{M}\,|\,f_i({\bf x})< 0,\,i\in \{1,\dots,m\}\}$. This is an unconstrained optimization problem. \medskip

{\bf III.} For each element $\Pi {\bf S}_{(i_1,\dots,i_k)}$ of the partition \eqref{partition-pi-22} proceed as follows:\medskip

{\bf Step 1.} On the open set 
$$\mathcal{O}_{(i_1,\dots, i_k)}:=\{{\bf x}\in \mathcal{M}\,|\, \nabla f_{i_1}({\bf x}), \dots, \nabla f_{i_k}({\bf x})\, \text{are linearly independent}\}\subset \mathcal{M},$$ construct the Lagrange multiplier functions $\mu_{i_1}, \dots, \mu_{i_k}$ defined in \eqref{formule-mu-v2}. Using the active constraints $f_{i_1}, \dots, f_{i_k}$, in some cases these functions can be further simplified, see \cite{first-order}, \cite{second-order}, and \cite{simplectic}.

Find all ${\bf x}\in \mathcal{O}{(i_1,\dots, i_k)}$ that solve the system of equations 
\begin{equation}\label{sistem-de-rezolvat}
\nabla f_0({\bf x})-\sum\limits_{i\in \{i_1,\dots,i_k\}}\mu_{i}({\bf x})\nabla f_i({\bf x})={\bf 0}.
\end{equation}
Since $\Pi {\bf S}_{(i_1,\dots, i_k)}\subset \mathcal{O}{(i_1,\dots, i_k)}$, select the solutions ${\bf x}^*$ that verify the active constraints $f_{i_1}({\bf x})=0, \dots,$ $f_{i_k}({\bf x})=0$ and the strict inequalities $f_j({\bf x}^*)<0,\,j\notin\{i_1,\dots,i_k\}$. This is the set of candidate solutions (coming from the stratum where $f_{i_1},\dots, f_{i_k}$ are the active constraints) for the initial problem \eqref{without-equlities-998}.\medskip

{\bf Step 2.} For each ${\bf x}^*$ in the set of candidate solutions found in the previous step, compute the set of (scalar) Lagrange multipliers $\{\mu_{i_1}({\bf x}^*), \dots, \mu_{i_k}({\bf x}^*)\}$. 

2.1. If $\mu_{i}({\bf x}^*)<0$, for all $i\in \{i_1, \dots, i_k\}$, then check the positive definiteness, i.e. 
\begin{equation}\label{conditie-Hessiana-22}
\left[\text{Hess} f_0({\bf x}^*)-\sum\limits_{i\in \{i_1, \dots, i_k\}} \mu_{i}({\bf x}^*)  \text{Hess} f_{i}({\bf x}^*)\right]_{{|T_{{\bf x}^*}\Pi {\bf S}_{(i_1,\dots,i_k)}\times T_{{\bf x}^*}\Pi {\bf S}_{(i_1,\dots,i_k)}}}>0.
\end{equation}
If \eqref{conditie-Hessiana-22} is also verified, then such a point ${\bf x}^*$ is a solution for the initial problem  \eqref{without-equlities-998}.

2.2. If there exists $i\in \{i_1, \dots, i_k\}$ such that $\mu_i({\bf x}^*)>0$, then  such a point is not a solution for \eqref{without-equlities-998}.

2.3. If $\mu_{i}({\bf x}^*)\leq 0$, for all $i\in \{i_1, \dots, i_k\}$, and at least one is zero, then we cannot decide if ${\bf x}^*$ is a solution of \eqref{without-equlities-998}. In the literature, this corresponds to the fact that the strictly complementary conditions are not satisfied (see \cite{nocedal}, Definition 12.5, pp. 321). \hfill $\Box$\medskip

In many real-world applications, the computational burden makes the use of numerical methods essential. The algorithm presented above enables the application of steepest descent and Newton-type optimization algorithms.

We have shown that for solving the initial problem \eqref{without-equlities-998} it is sufficient to proceed stratum by stratum. Once we fix a stratum $\Pi {\bf S}_{(i_1,\dots,i_k)}$, the problem splits in two parts. First, on $\mathcal{O}_{(i_1,\dots, i_k)}$ solve numerically, using embedded steepest descent as in \cite{first-order} or embedded Newton method as in \cite{thomson} and \cite{second-order}, the classical optimization problem with constraints 
\begin{equation*}\label{}
\begin{aligned}
\underset{{\bf x}\in \mathcal{O}_{(i_1,\dots, i_k)}}{\text{argmin}}\quad & f_0(\mathbf{x}) \\
\text{s.t.}\quad & f_{i_1}({\bf x})=0, \dots,  f_{i_k}({\bf x})=0. 
\end{aligned}
\end{equation*}
Second, choose the solutions ${\bf x}^*$ that also satisfy the necessary conditions $f_j({\bf x}^*)<0,\,j\notin\{i_1,\dots,i_k\}$ and  $\mu_{i}({\bf x}^*)\leq 0$, for all $i\in \{i_1, \dots, i_k\}$. If, moreover, $\mu_{i}({\bf x}^*)<0$ for all $i\in \{i_1, \dots, i_k\}$, we have obtained a solution for \eqref{without-equlities-998}.

\section{Double ice-cream cone example}

Let ${\bf c}=(c_1,c_2,c_3)$, where all components of ${\bf c}$ are nonzero real numbers.  Consider the following optimization problem:
\begin{equation*}\label{}
\begin{aligned}
\underset{{\bf x}\in \R^3}{\text{argmin}}\quad & f_0(\mathbf{x}):=c_1x_1+c_2x_2+c_3x_3 \\
\text{s.t.}\quad & f_1({\bf x}):=x_1^2+x_2^2+x_3^2-1 \le 0, \\
 &  f_2({\bf x}):=x_1^2+x_2^2-x_3^2 \le 0.
\end{aligned}
\end{equation*}
Adding the slack variables $z_1$ and $z_2$, we transform the above optimization problem with inequality constraints into a classical optimization problem with equality constraints:  
\begin{equation*}\label{}
\begin{aligned}
\underset{({\bf x}, {\bf z})\in \R^3\times \R^2}{\text{argmin}}\quad & F_0(\mathbf{x}, {\bf z}) \\
\text{s.t.}\quad &  F_1({\bf x}, {\bf z}):=x_1^2+x_2^2+x_3^2 +\frac{1}{2}z_1^2-1 = 0, \\
 & F_2({\bf x}, {\bf z}):=x_1^2+x_2^2-x_3^2 +\frac{1}{2}z_2^2= 0,
\end{aligned}
\end{equation*}
where $F_0({\bf x}, {\bf z}):=f_0({\bf x})$.\medskip

{\bf I.} First we study the structure of the set ${\bf F}^{-1}({\bf 0}_2)$. The Jacobian of the constraint functions is 
$$\begin{bmatrix}
2x_1 & 2x_1 \\
2x_2 & 2x_2 \\
2x_3 & -2x_3 \\
z_1 & 0 \\
0 & z_2
\end{bmatrix}^T.$$
In order for ${\bf 0}_2\in \R^2$ to be a regular value for ${\bf F}$, we need to exclude the two points $(0,0,0,\pm\sqrt{2},0)$ from ${\bf F}^{-1}({\bf 0}_2)$.
The manifold  ${\bf F}^{-1}({\bf 0}_2)^{\text{reg}}={\bf F}^{-1}({\bf 0}_2)\backslash \{(0,0,0,\pm\sqrt{2},0)\}$ has the following partition:
\begin{align*}
& {\bf S}_0   =\left\{({\bf x}, {\bf z})\in{\bf F}^{-1}({\bf 0}_{2}) \,|\,z_1\neq 0, z_2\neq 0\right\}\subset {\bf F}^{-1}({\bf 0}_2)^{\text{reg}}, \\
 & {\bf S}_{(1)}   =\left\{({\bf x}, {\bf z})\in{\bf F}^{-1}({\bf 0}_2)^{\text{reg}}\,|\,z_1=0, z_2\neq 0\right\}, \\
 & {\bf S}_{(2)}   =\left\{({\bf x}, {\bf z})\in{\bf F}^{-1}({\bf 0}_2)^{\text{reg}} \,|\,z_1\neq 0, z_2= 0\right\}, \\
 &  {\bf S}_{(1,2)}   =\left\{({\bf x}, {\bf z})\in{\bf F}^{-1}({\bf 0}_2)^{\text{reg}} \,|\,z_1=0, z_2= 0\right\}
\end{align*}
and its projection is 
\begin{align*}
 &\Pi{\bf S}_0   =\left\{{\bf x}\in \R^3 \,|\, x_1^2+x_2^2+x_3^2-1 < 0, x_1^2+x_2^2-x_3^2 < 0\right\}, \\
  &\Pi {\bf S}_{(1)}   =\left\{{\bf x}\in \R^3 \,|\, x_1^2+x_2^2+x_3^2-1 = 0, x_1^2+x_2^2-x_3^2 < 0\right\}, \\
 &\Pi {\bf S}_{(2)}  = \left\{{\bf x}\in \R^3\backslash \{(0,0,0)\} \,|\, x_1^2+x_2^2+x_3^2-1 < 0, x_1^2+x_2^2-x_3^2 = 0\right\}, \\
 & \Pi {\bf S}_{(1,2)}   =\left\{{\bf x}\in \R^3 \,|\, x_1^2+x_2^2+x_3^2-1 = 0, x_1^2+x_2^2-x_3^2 = 0\right\}.
\end{align*}
The geometrical description of the above sets is as follows (see Figure \ref{con2}):
\begin{enumerate}[(i)]
\item $\Pi{\bf S}_0$ is the intersection between the open ball and the open solid cone;
\item $\Pi{\bf S}_{(1)}$ is formed by the two open spherical caps; 
\item $\Pi{\bf S}_{(2)}$ is formed by the skin of the cone that is inside the open ball minus the origin;
\item $\Pi {\bf S}_{(1,2)}$ is formed by the two circles given by the intersection of the sphere with the skin of the cone.
\end{enumerate}

{\bf II.} It is clear that on $\Pi{\bf S}_0$ we find no solution for the optimization problem since ${\bf 0}\neq {\bf c}=\nabla f_0$.\medskip

{\bf III.} We proceed to solve the optimization problem on each stratum.\bigskip

$\bullet$ {\bf Search for solutions on $\Pi {\bf S}_{(1)}$ (the case when $f_1$ is the active constraint).} \medskip

{\bf Step 1.} In this case $\mathcal{O}_{(1)}=\R^3\backslash \{(0,0,0)\}$ and the Lagrange multiplier function is $\mu_1({\bf x})=\frac{\left<{\bf c}, {\bf x}\right>}{2\|{\bf x}\|^2}$. The system \eqref{sistem-de-rezolvat} becomes
\begin{equation*}
\nabla f_0({\bf x})-\mu_{1}({\bf x})  \nabla f_{1}({\bf x})={\bf 0}.
\end{equation*}
Using the constraint $\|{\bf x}\|^2=1$, it is equivalent with
$$
{\bf c}-\left<{\bf c},{\bf x}\right>{\bf x}={\bf 0}.
$$
The solutions of this system are ${\bf x}^*_{\pm}=\pm\frac{\bf c}{\|{\bf c}\|}$, which verify the active constraint. These points are candidate solutions for the optimization problem if and only if 
$
c_1^2+c_2^2-c_3^2<0.
$
\medskip

{\bf Step 2.} Under the condition $c_1^2+c_2^2-c_3^2<0$, we have $\mu_1({\bf x}^*_{\pm})=\pm\frac{\|{\bf c}\|}{2}$. 

Because $\mu_1({\bf x}^*_{+})=\frac{\|{\bf c}\|}{2}>0$, the candidate ${\bf x}^*_+$ cannot be a solution for our argmin problem (actually is a solution for the argmax problem).

The only remaining candidate is ${\bf x}^*_{-}$. It becomes a solution if condition \eqref{conditie-Hessiana-22} is verified. In our case  \eqref{conditie-Hessiana-22} it is clearly verified, since
$$\left(\|{\bf c}\|\mathbb{I}_3\right)_{|T_{{\bf x}^*_{-}}\Pi {\bf S}_{(1)}\times T_{{\bf x}^*_{-}}\Pi {\bf S}_{(1)}}>0.$$

$\bullet$ {\bf Search for solutions on $\Pi {\bf S}_{(2)}$ (the case when $f_2$ is the active constraint).}\medskip 

{\bf Step 1.} In this case $\mathcal{O}_{(2)}=\R^3\backslash \{(0,0,0)\}$ and the Lagrange multiplier function is $\mu_2({\bf x})=\frac{c_1x_1+c_2x_2-c_3x_3}{2\|{\bf x}\|^2}$. The system \eqref{sistem-de-rezolvat} becomes
\begin{equation*}
\nabla f_0({\bf x})-\mu_{2}({\bf x})  \nabla f_{2}({\bf x})={\bf 0}.
\end{equation*}
or equivalently:
$$\begin{cases}
-c_{2} x_1x_{2}+c_{3} x_1 x_{3}+c_{1} x_{2}^{2}+c_1 x_{3}^{2}=0 \\
-c_{1} x_{1} x_{2}+c_{2} x_{1}^{2}+c_{2} x_{3}^{2}+c_{3} x_{2} x_{3}=0 \\
x_3(c_{1} x_{1}+c_{2} x_{2}+c_{3} x_{3})=0,
\end{cases}$$
which has the one parameter set of solutions $\alpha(c_1,c_2,-c_3)$, with $\alpha\in \R^*$. The candidate solutions are the ones the verify $c_1^2+c_2^2+c_3^2<\frac{1}{\alpha^2}$ and $c_1^2+c_2^2-c_3^2=0$.

When  $c_1^2+c_2^2-c_3^2\neq 0$, then we have no candidate solutions.

If $c_1^2+c_2^2-c_3^2=0$ and by hypothesis $c_3\neq 0$, we have the following family of candidate solutions $\alpha(c_1,c_2,-c_3)$, with $\alpha\in  \left(-\frac{\sqrt{2}}{2|c_3|},0\right)\cup \left(0, \frac{\sqrt{2}}{2|c_3|}\right)$.\medskip

{\bf Step 2.} Under the condition $c_1^2+c_2^2-c_3^2=0$ and $\alpha\in  \left(-\frac{\sqrt{2}}{2|c_3|},0\right)\cup \left(0, \frac{\sqrt{2}}{2|c_3|}\right)$, each point ${\bf x}^*_{\alpha}:=\alpha(c_1,c_2,-c_3)$ is a candidate solution and $\mu_2({\bf x}^*_{\alpha})=\frac{1}{2\alpha}$. 

When $\alpha\in \left(0, \frac{\sqrt{2}}{2|c_3|}\right)$, we obtain $ \mu_2({\bf x}^*_{\alpha})>0$ and consequently the candidate ${\bf x}^*_{\alpha}$ cannot be a solution for our argmin problem.

When $\alpha\in  \left(-\frac{\sqrt{2}}{2|c_3|},0\right)$, we obtain $ \mu_2({\bf x}^*_{\alpha})<0$ and so, it is necessary to check the positive definiteness of the Hessian matrix 
$$-\frac{1}{\alpha} \begin{bmatrix}
1 & 0 & 0 \\
0 & 1 & 0 \\
0 & 0 & -1\end{bmatrix}_{|T_{{\bf x}^*_{\alpha}}\Pi {\bf S}_{(2)}\times T_{{\bf x}^*_{\alpha}}\Pi {\bf S}_{(2)}}.
$$ 
Choosing the basis ${\bf e}_1:=(c_3,0,-c_1)$ and ${\bf e}_2:=(0, c_3, -c_2)$ for the tangent space $T_{{\bf x}^*_{\alpha}}\Pi {\bf S}_{(2)}$, the Hessian matrix becomes 
$$-\frac{1}{\alpha} \begin{bmatrix}
c_2^2 & -c_1c_2  \\
-c_1c_2 & c_1^2 \end{bmatrix}.$$ 
This matrix has as eigenvalues 0 and $-\frac{1}{\alpha}c_3^2$, thus being positive semi-definite, but not positive definite. We cannot decide if ${\bf x}^*_{\alpha}$ are solutions for our initial argmin problem.\bigskip

$\bullet$ {\bf Search for solutions on $\Pi {\bf S}_{(1,2)}$ (the case when both $f_1$ and $f_2$ are active constraints). }

{\bf Step 1.} In this case $\mathcal{O}_{(1,2)}=\R^3\backslash (\{(x_1,x_2,0)\,|\,x_1,x_2\in \R\}\cup\{(0,0,x_3)\,|\,x_3\in \R\})$ and the Lagrange multiplier functions (after using the two active constraints) are
$$\mu_1({\bf x})=\frac{c_1x_1+c_2x_2+c_3x_3}{2},\,\,\mu_2({\bf x})=\frac{c_1x_1+c_2x_2-c_3x_3}{2}.$$
 The system \eqref{sistem-de-rezolvat} becomes
\begin{equation*}
\nabla f_0({\bf x})-\mu_{1}({\bf x})  \nabla f_{1}({\bf x})-\mu_{2}({\bf x})  \nabla f_{2}({\bf x})={\bf 0}.
\end{equation*}
or equivalently:
$$\begin{cases}
x_{2} \left(c_{1} x_{2}-c_{2} x_{1}\right)=0 \\
x_{1} \left(c_{1} x_{2}-c_{2} x_{1}\right)=0.
\end{cases}$$
The candidate solutions on $\Pi {\bf S}_{(1,2)}$ are the four points
$${\bf u}_{\pm}^*=\left(\frac{c_1}{\sqrt{2(c_1^2+c_2^2)}}, \frac{c_2}{\sqrt{2(c_1^2+c_2^2)}}, \pm \frac{\sqrt{2}}{2}\right),\,\,
{\bf v}_{\pm}^*=\left(-\frac{c_1}{\sqrt{2(c_1^2+c_2^2)}}, -\frac{c_2}{\sqrt{2(c_1^2+c_2^2)}}, \pm \frac{\sqrt{2}}{2}\right).$$

{\bf Step 2.} The Lagrange multipliers for the above four candidate solutions are
$$\mu_1({\bf u}^*_{\pm})=\frac{\sqrt{c_1^2+c_2^2}-c_3}{2\sqrt{2}},\,\,\mu_2({\bf u}^*_{\pm})=\frac{\sqrt{c_1^2+c_2^2}+c_3}{2\sqrt{2}}$$
and 
$$\mu_1({\bf v}^*_{\pm})=-\frac{\sqrt{c_1^2+c_2^2}-c_3}{2\sqrt{2}},\,\,\mu_2({\bf v}^*_{\pm})=-\frac{\sqrt{c_1^2+c_2^2}+c_3}{2\sqrt{2}}.$$

For the points ${\bf u}^*_{\pm}$, at least one of the Lagrange multipliers is strictly positive. Consequently, these two points cannot be solutions for our initial argmin problem.

Under the condition $c_1^2+c_2^2>c_3^2$, for both points ${\bf v}^*_{\pm}$ the Lagrange multipliers are strictly negative.  It is necessary to check the positive definiteness of the Hessian matrices
$$\text{Hess} \left({f_{0}}_{|\Pi{\bf S}_{(1,2)}}\right)({\bf v}^*_{\pm})= \left[\begin{array}{ccc}
\sqrt{2 c_{1}^{2}+2 c_{2}^{2}} & 0 & 0 
\\
 0 & \sqrt{2 c_{1}^{2}+2 c_{2}^{2}} & 0 
\\
 0 & 0 & \mp c_{3} \sqrt{2} 
\end{array}\right]_{|T_{{\bf v}^*_{\pm}}\Pi {\bf S}_{(1,2)}\times T_{{\bf v}^*_{\pm}}\Pi {\bf S}_{(1,2)}}.
$$ 
Choosing the basis ${\bf e}:=(-c_2,c_1,0)$ for the tangent spaces $T_{{\bf v}^*_{\pm}}\Pi {\bf S}_{(1,2)}$, the Hessian matrices become 
$$ \begin{bmatrix}
\sqrt{2}(c_1^2 +c_2^2)^{\frac{3}{2}}  \end{bmatrix}.$$ 
Since the above expression is strictly positive, we conclude that both ${\bf v}^*_+$ and ${\bf v}^*_-$ are local minima and therefore are solutions for the initial argmin problem. 

Under the condition $c_1^2+c_2^2<c_3^2$ one of the Lagrange multipliers is strictly positive and consequently, ${\bf v}^*_+$ and ${\bf v}^*_-$ cannot be solutions for our initial argmin problem.

Under the condition $c_1^2+c_2^2=c_3^2$ one of the Lagrange multipliers is zero, and the other one is strictly negative and the Hessian matrices $\text{Hess} \left({f_{0}}_{|\Pi{\bf S}_{(1,2)}}\right)({\bf v}^*_{\pm})$ are positive definite, consequently we cannot decide if ${\bf v}^*_+$ and ${\bf v}^*_-$ are solutions for our initial argmin problem.\medskip




It follows that for the ''cone case'' $c_1^2 + c_2^2 = c_3^2$, the first- and second-order tests are inconclusive. We give an ad hoc analysis by directly inspecting the values of the objective function $f_0$. Since the double ice-cream cone is a compact set and $f_0$ is a continuous function, when restricted to this set $f_0$ has global extrema. From the previous analysis the candidates are $\mathbf{v}_{\pm}^*$, $\mathbf{x}^*_{\alpha}$ with
$\alpha\in\!\left(-\tfrac{\sqrt{2}}{2|c_3|}, 0\right)$, and possibly the origin of the cone. 

By direct computation we obtain:
\begin{align*}
&f_0({\bf u}^*_-)=\frac{\sqrt{2}}{2}(|c_3|-c_3);\,\,\,
f_0({\bf u}^*_+)=\frac{\sqrt{2}}{2}(|c_3|+c_3);\\
&f_0({\bf v}^*_-)=-\frac{\sqrt{2}}{2}(|c_3|+c_3);\,\,\,
f_0({\bf v}^*_+)=\frac{\sqrt{2}}{2}(-|c_3|+c_3);\\
&f_0({\bf 0})=f_0({\bf x}^*_{\alpha})=0,~~~\forall \alpha\in  \left(-\frac{\sqrt{2}}{2|c_3|},0\right)\cup \left(0, \frac{\sqrt{2}}{2|c_3|}\right). 
\end{align*}

Therefore, it follows that for $c_3>0$ the (global) minimizer of the objective function $f_0$ is ${\bf v}^*_-$ (and the (global) maximizer is ${\bf u}^*_+$), while for $c_3<0$ the (global) minimizer of the objective function $f_0$ is ${\bf v}^*_+$ (and the (global) maximizer is ${\bf u}^*_-$).

In order to illustrate this case (see Figure \ref{con2}), we consider ${\bf c}=(1,1,\sqrt{2})$. The point ${\bf v}_-^*$ is the global minimum and $f_0({\bf v}_-^*)=-2$. The continuous family $\mathbf{x}^*_{\alpha}$ with
$\alpha\in\!\left(-\tfrac{\sqrt{2}}{2|c_3|}, 0\right)$ and ${\bf v}_+^*$ are local minima, where the value of $f_0$ in these points is zero. The vertex of the cone behaves like a ''saddle'' point. Analogously,  the point ${\bf u}_+^*$ is the global maximum and $f_0({\bf u}_+^*)=2$, the continuous family $\mathbf{x}^*_{\alpha}$ with
$\alpha\in\!\left(0,\tfrac{\sqrt{2}}{2|c_3|}\right)$ and ${\bf u}_-^*$ are local maxima, where the value of $f_0$ in these points is also zero.


\begin{figure}
\centering
\includegraphics[scale=0.23]{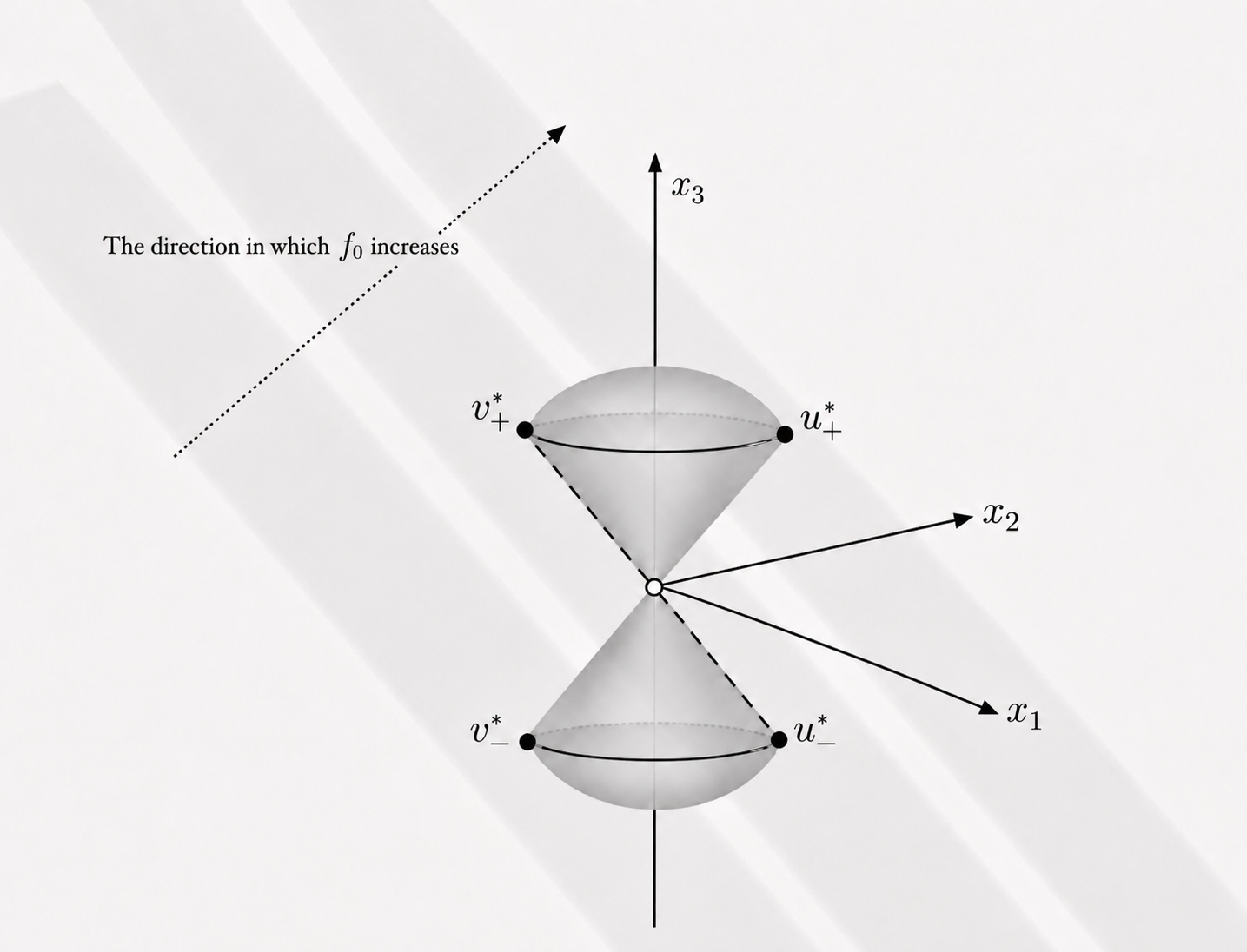}
\caption{The planes represent the level sets of $f_0$. The point ${\bf v}_-^*$ is the global minimum. The continuous family $\mathbf{x}^*_{\alpha}$ with
$\alpha\in\!\left(-\tfrac{\sqrt{2}}{2|c_3|}, 0\right)$ and ${\bf v}_+^*$ are local minima.}
\label{con2}
\end{figure}

\bigskip

{\bf Summary.} We have seen that the solutions of the argmin problem are dependent on the relative position of the vector ${\bf c}$ to the cone. The following table summarizes the discussion. \medskip

\begin{table}[h!]
\centering
\renewcommand{\arraystretch}{2.2}
\begin{tabular}{|>{\bfseries}m{2.2cm}|m{2.5cm}|m{9.5cm}|}
\hline
\textbf{Position of $\mathbf{c}$} & \textbf{Condition} & \textbf{Solutions of the argmin problem} \\
\hline
Interior of the cone
& $c_1^2 + c_2^2 < c_3^2$
& The only solution of the argmin problem (i.e. local minima) is $\mathbf{x}^*_{-}=-\dfrac{\mathbf{c}}{\|\mathbf{c}\|}$, found on the stratum $\Pi\mathbf{S}_{(1)}$. \\
\hline
Exterior of the cone
& $c_1^2 + c_2^2 > c_3^2$
& Two solutions $\mathbf{v}^*_+$ and $\mathbf{v}^*_-$ of the argmin problem, both found on the stratum $\Pi\mathbf{S}_{(1,2)}$. \\
\hline
On the cone
& $c_1^2 + c_2^2 = c_3^2$
& \begin{minipage}[t]{9.5cm}
The first order method produces the candidates
$\mathbf{u}_{\pm}^*,\, \mathbf{v}_{\pm}^* \in \Pi\mathbf{S}_{(1,2)}$
and two continuous families
$\mathbf{x}^*_{\alpha}\in \Pi\mathbf{S}_{(2)}$,
$\alpha\in\!\left(-\tfrac{\sqrt{2}}{2|c_3|},0\right)\!\cup\!\left(0,\tfrac{\sqrt{2}}{2|c_3|}\right)$.

\medskip
The second order method excludes $\mathbf{u}_{\pm}^*$
and $\mathbf{x}^*_{\alpha}$ with
$\alpha\in\!\left(0,\tfrac{\sqrt{2}}{2|c_3|}\right)$
and cannot decide the nature of the remaining candidates.\\ A direct inspection of the level sets of the objective function $f_0$ shows that for $c_3>0$ the (global) minimizer is ${\bf v}^*_-$ and for $c_3<0$ the (global) minimizer is ${\bf v}^*_+$.
\vspace{2pt}
\end{minipage} \\
\hline
\end{tabular}
\caption{Solutions of the argmin problem depending on the position of $\mathbf{c}$ relative to the cone.}
\end{table}


\end{document}